\newtheorem{thm}{\bf{Theorem}}[section]
\newtheorem{lem}[thm]{\bf{Lemma}}
\newtheorem{sub}[thm]{\bf{Subroutine}}
\newtheorem{df}[thm]{\bf{Definition}}
\newtheorem{cor}[thm]{\bf{Corollary}}
\newtheorem{rem}[thm]{\bf{Remark}}
\newtheorem{ass}[thm]{\emph{Assumption}}
\newcommand{\dom}{\operatorname{dom}}
\newcommand{\intt}{\operatorname{int}}
\newcommand{\cl}{\operatorname{cl}}
\newcommand{\rank}{\operatorname{rank}}
\newcommand{\Id}{\operatorname{\tt I}}
\newcommand{\dist}{\operatorname{dist}}
\newcommand{\conv}{\operatornamewithlimits{conv}}
\newcommand{\argmin}{\operatornamewithlimits{argmin}}
\newcommand{\ri}{\operatorname{ri}}
\newcommand{\Span}{\operatorname{span}}
\newcommand{\R}{\operatorname{\mathbb{R}}}
\newcommand{\C}{\operatorname{\mathcal{B}}}
\newcommand{\U}{\operatorname{\mathcal{U}}}
\newcommand{\V}{\operatorname{\mathcal{V}}}
\newcommand{\nul}{\operatorname{null}}
\newcommand{\VU}{\operatorname{\mathcal{VU}}}
\newcommand{\OU}{\operatorname{{U}}}
\newcommand{\OV}{\operatorname{{V}}}
\newcommand{\Prox}{\operatorname{Prox}}
\newcommand{\Proj}{\operatorname{Proj}}
\newcommand{\myinput}[4]{%
\begin{center}
\scriptsize\input{#1.txt}\end{tabular} \caption{23}\label{tab-#3} \end{table}\end{center}}
\newcommand{\Va}{{\sc CompBun}}
\newcommand{\Vb}{{\sc ExBun}}
\newcommand{\Vc}{{\sc InexBun}}
\newcommand{\Vd}{{\sc DFO-VU}}
\newcommand{\Ve}{{\sc Nomad}}
\newcommand{\ttRA}{{{RA}}}
\newcommand{\lttRA}{{{RA}}}
\newcommand{\maxs}{$\max_{\tt S}$}
\begin{document}

\title{A derivative-free $\VU$-algorithm for convex finite-max problems}
\author{Warren Hare\and Chayne Planiden\and Claudia Sagastiz\'abal}
\institute{Warren Hare \at Department of Mathematics, University of British Columbia, Okanagan Campus\\Kelowna, B.C. V1V 1V7, Canada\\Research partially supported by Natural Sciences and Engineering Research Council (NSERC) of Canada Discovery Grant 355571-2013.\\warren.hare@ubc.ca, ORCID 0000-0002-4240-3903
 \and 
Chayne Planiden \Letter\at School of Mathematics and Applied Statistics, University of Wollongong\\Wollongong, NSW, 2500, Australia\\Research supported by UBC UGF and by NSERC of Canada.\\chayne@uow.edu.au, ORCID 0000-0002-0412-8445, Tel: +61 (02) 4221 4564.\and Claudia Sagastiz\'abal\at 
Adjunct Researcher IMECC - UNICAMP, 13083-859, Campinas, SP, Brazil. \\
Research partially supported by CNPq Grant 303905/2015-8, CEMEAI, and FAPERJ.\\ sagastiz@unicamp.br, ORCID 0000-0002-9363-9297}
\date{Received: date / Accepted: date}
\maketitle
\begin{abstract}
The $\VU$-algorithm is a superlinearly convergent method for minimizing nonsmooth, convex functions. At each iteration, the algorithm works
 with a certain $\V$-space and its orthogonal $\U$-space, such that the nonsmoothness of the objective function is concentrated on its projection onto the $\V$-space, and on the $\U$-space the projection is smooth. This structure allows for an alternation between a Newton-like step where the function is smooth, and a proximal-point step that is used to find iterates with promising $\VU$-decompositions.  We establish a derivative-free variant of the $\VU$-algorithm for convex finite-max objective functions.  We show global convergence and provide numerical results from a proof-of-concept implementation, which demonstrates the feasibility and practical value of the approach. We also carry out some tests using nonconvex functions and discuss the results.
\keywords{convex minimization\and derivative-free optimization\and finite-max function\and proximal-point mapping\and  $\U$-gradient\and $\U$-Hessian\and $\VU$-algorithm\and $\VU$-decomposition}
\noindent \textbf{AMS Subject Classification:} Primary 49M30, 90C56 ; Secondary 65K10, 90C20.\\
\end{abstract}

\section{Introduction}\label{sec:intro}

In this paper, we consider the finite-dimensional, unconstrained minimization problem
\begin{equation}\label{prob1}
\min\limits_{x\in\R^n}f(x)
\end{equation}
with $f$ a nonsmooth, proper, \emph{convex finite-max function},
    $$f(x) = \max_{i=1, ..., m} f_i(x),$$where for $i\in\{1,2,\ldots,m\}$ the subfunctions
$f_i:\R^n\to\R$ are of class $\mathcal{C}^{2+}$. We work under the assumption that there is an oracle delivering only function values. We refer to the oracle as a ``grey-box'' because, for a given $x\in\R^n$ the oracle informs not only $f(x)$, but also the values of each subfunction $f_i(x)$.  Our goal is to exploit the grey-box information in a derivative-free optimization setting, blended with a suitable variant of bundle methods. \par

We remark that this finite-max grey-box structure is a natural outcome of any optimization problem where multiple simulations are employed and then a worst-case outcome must be optimized.  For example, see \cite{BigdeliHareNutiniTesfamariam-2016}, where multiple simulations were used to analyze structural quality of buildings after seismic retrofitting was applied.  In that case, the goal was to minimize the worst-case damage. 

While we acknowledge that convexity of the objective in a finite-max grey-box function would be difficult to confirm, we note that many nonsmooth optimization problems take a convex finite-max structure (e.g., maximize eigenvalue optimization).  Regardless, we view this as a necessary starting point for research in this direction.  If an algorithm cannot be designed and proven to converge for convex finite-max grey-box functions, then designing an algorithm for a more general setting would be intractable. Although the convergence results in this paper are proved for the convex case only, our numerical testing includes a set of nonconvex trials as well. Those results are discussed further in Section \ref{sec:numerics}.

Derivative-free optimization (DFO) studies algorithms that use only function values to minimize the objective \cite{AudetHare_DFObook,connscheinbergvicente_book}.  Due to its broad applicability, particularly for optimizing simulations, DFO has seen many successful applications in the past decade (see \cite{Audet2014,HareNutiniTesfamariam-2013} and references therein).  DFO algorithms can be split broadly into two categories, direct search methods and model-based methods.  Model-based methods approximate the objective function with a model function, and then use the model to guide the optimization algorithm \cite[Part 4]{AudetHare_DFObook}.

While model-based methods were originally designed for optimization of smooth objective functions (see, for example, \cite{ConnScheinbergToint1997,CustVicente2007,HareLucet2014,Powell2008}), recent research has moved away from this assumption \cite{Hare-Macklem-2013,Hare-Nutini-2013,LarsonMenickellyWild2016}. In \cite{Hare-Macklem-2013,Hare-Nutini-2013}, it is assumed that the true objective function takes the form $$F = \max\{ f_i : i = 1, 2, \ldots, nf \},$$ where each $f_i$ is given by a blackbox that provides only function values.  In \cite{LarsonMenickellyWild2016}, it is assumed that the objective function takes the form $F = \sum_{i=1}^m |f_i|$, where each $f_i$ is given by a blackbox that provides only function values.  In each case, it is shown that such information allows for the creation of a convergent model-based DFO algorithm for nonsmooth optimization.

Bundle methods proceed by collecting information (function values and subgradient vectors) along iterations, then using that information to build a model of the objective function and seek a new incumbent solution (often called a serious point in bundle method literature) \cite{bonnans-gilbert-lemarechal-sagastizabal-2006,deOliveiraSagaLemar2014}.  Bundle methods have been widely established as the most robust and effective technique for nonsmooth optimization \cite{ApkarianNollRavanbod2016,bonnans-gilbert-lemarechal-sagastizabal-2006,HareSagaSolodov2016,KarmitsaBagirovTaheri2017,Kiwiel2006,NollProtRondepierre2008,deOliveiraSolodov2016}. They are also well-known for their ability to work with the structure of a given problem.  Specialized bundle methods have been developed considering eigenvalue optimization \cite{HelmbergOvertonRendl2014,HelmbergRendl2000}, sum functions \cite{proxsplit,FrangioniGorgone2014}, chance-constrained problems \cite{vanackooij2014level}, composite functions \cite{LewisWright2016,sagastizabal-2013} and difference-convex functions \cite{FuduliGaudiosoGial2004,JokiBagirovKarmitsa2017,deOliveira2017}.

Of particular interest to this paper is the $\mathcal{VU}$-algorithm for convex minimization \cite{vualg}.  The $\mathcal{VU}$-algorithm alternates between a proximal-point step and a `$\mathcal{U}$-Newton' step (see Step 4 of the Conceptual $\VU$-algorithm in Subsection \ref{ssec:conceptualVU}) to achieve superlinear convergence in the minimization of nonsmooth convex functions \cite{vualg}.
The $\VU$-algorithm  has proven effective in dealing with the challenges that arise in the minimization of nonsmooth convex functions \cite{numvu,vudecomp,vutheory,vusmoothness,vualg}. It continues to be a method of interest in the optimization community, having been expanded to use on convex functions with primal-dual gradient structure \cite{mifflin2000functions,vutheory,mifflin2003primal} and even some nonconvex functions \cite{vusmoothness}. The basic tenet is to separate the space into two orthogonal subspaces, called the $\V$-space and the $\U$-space, such that near the current iteration point the nonsmoothness of $f$ is captured in the $\V$-space and the smoothness of $f$ is captured in the $\U$-space.  This procedure is known as $\VU$-decomposition.  Once this is accomplished, one takes a proximal-point step ($\V$-step) parallel to the $\V$-space, in order to find incumbent solutions with favourable $\VU$-decompositions, then a Newton-like step ($\U$-step) parallel to the $\U$-space. This process is repeated iteratively and converges to a minimizer of $f.$ In fact, the $\VU$-algorithm has been proved superlinearly convergent under reasonable conditions \cite{vualg}. Further details on the $\VU$-algorithm can be found in Section \ref{sec:backgroundandVU} of the present, and proof of convergence 
(for oracles delivering subgradient information)
is given in \cite{vualg}. Techniques used in the implementation of the $\VU$-algorithm are also currently
being used in gradient sampling methods \cite{salomao2016differentiability,salomao2017local,salomao2016second}, see Section \ref{sec:conc} for details.\par In order to apply the $\VU$-algorithm, at each iteration it is necessary to do the $\VU$-decomposition, compute the proximal point to apply the $\V$-step, then compute the $\U$-gradient and $\U$-Hessian to apply the $\U$-step (each of these computations is formally defined in Section \ref{sec:backgroundandVU}). In our grey-box optimization setting, none of these objects is directly available. However, in \cite{numvu} it was shown that the $\VU$-decomposition, $\U$-gradient, and $\U$-Hessian can be approximated numerically with controlled precision for finite-max functions. Moreover, in \cite{hareplaniden2016} a derivative-free algorithm for computing proximal points of convex functions that only requires approximate subgradients was developed.  Finally, in \cite{Hare-Nutini-2013} it was shown how to approximate subgradients for convex finite-max functions using only function values.  Combined, these three papers provide a sufficient foundation to develop a derivative-free $\VU$-algorithm suitable for our grey-box optimization setting. We show that at each iteration, one can approximate subgradients of the objective function as closely as one wishes and use the inexact first-order information to obtain approximations of all the necessary components of the algorithm. We prove that the results of global convergence in \cite{vualg} can be extended to the framework of inexact gradients and Hessians.\par 

The remainder of this paper is organized as follows. We finish the present section with notation and a statement of our assumptions on the objective function. Section \ref{sec:backgroundandVU} contains the basic definitions used in this paper and provides a brief primer on the $\VU$-algorithm. Section \ref{sec:approx} presents details on the simplex gradient and Frobenius norm, which are tools needed for the DFO version of the algorithm, and establishes the DFO $\VU$-algorithm.  Section \ref{sec:approx} includes the DFO $\VU$-algorithm pseudo-code and provides some comments comparing our algorithm to other established DFO methods. In Section \ref{sec:conv}, we examine the convergence properties of the algorithm. In Section \ref{sec:numerics}, we showcase numerical results obtained for randomly generated max-of-quadratic functions. The numerical behaviour of the method on nonconvex functions is also explored, resulting in insight on its good performance (not yet backed up by a convergence analysis). Section \ref{sec:conc} summarizes this work and discusses future possibilities of this field of research, in particular regarding recent variants of gradient sampling methods \cite{salomao2016differentiability,salomao2017local,salomao2016second}.

\subsection{Notation}

We work in the finite-dimensional space $\R^n$, with inner product $x^\top y=\sum_{i=1}^nx_iy_i$ and induced norm $\|x\|=\sqrt{x^\top x}$. We use standard notation and concepts from convex analysis found in \cite{rockwets}. The identity matrix is denoted by $\Id$. We denote by $B_\delta$ the open ball of radius $\delta$ about the origin. Given a set $S$, we denote its interior, closure and relative interior by $\intt(S)$, $\cl(S)$ and $\ri(S)$, respectively. We denote the smallest convex set containing $S$, i.e. the convex hull of $S$, by $\conv S$. The span of a set of vectors $T$, denoted by $\Span T$, is the set of all linear combinations of the vectors in $T$.

As the objective function $f$ is convex and finite-valued, the \emph{subdifferential} of $f$ at a point $\bar{x}$, defined by
the set
$$\partial f(\bar{x}) = \{g\in\R^n: f(x) \geq f(\bar{x}) + g^\top(x- \bar{x})~\mbox{for all}~x\in\R^n\},$$
is well-defined and never empty. An element $g\in\partial f(\bar{x})$ is called a \emph{subgradient} of $f$ at $\bar{x}.$ The \emph{$\varepsilon$-subdifferential} of $f$ at $x$ is denoted $\partial_\varepsilon f(x)$ (with $g\in\partial_\varepsilon f(x)$ called an \emph{$\varepsilon$-subgradient}) and is defined by
$$\partial_\varepsilon f(\bar{x})=\{g\in\R^n:f(x)\geq f(\bar{x})+g^\top(x-\bar{x})-\varepsilon\mbox{ for all }x\in\R^n\}.$$
Given a finite-max function, the active indices provide an alternate manner of constructing the subdifferential:
\begin{equation}\label{convexhull}
\partial f(\bar{x}) = \conv\{\nabla f_i(\bar{x}) : i \in A(\bar{x})\},\end{equation}
where $A(\bar{x}) = \{ i : f_i(\bar{x}) = f(\bar{x})\}$.
\begin{df}
A function $f:\R^n\to\R^m$ is a \emph{$\mathcal{C}^k$ ($\mathcal{C}^{k+}$) function} if all partial derivatives of $f$ of degree $0$ to $k$ exist and are (locally Lipschitz) continuous.
\end{df}
\begin{df}
Given a differentiable function $f:\R^n\to\R^m$, the \emph{Jacobian} of $f$, written $J_f$, is the matrix of all partial derivatives of $f$:
$$J_f=\left[\begin{array}{c c c}\frac{\partial f}{\partial x_1}&\cdots&\frac{\partial f}{\partial x_n}\end{array}\right]=\left[\begin{array}{c c c}
\frac{\partial f_1}{\partial x_1}&\cdots&\frac{\partial f_1}{\partial x_n}\\
\vdots&\ddots&\vdots\\
\frac{\partial f_m}{\partial x_1}&\cdots&\frac{\partial f_m}{\partial x_n}\end{array}\right].$$
\end{df}
\begin{df}
Given a point $\bar{x}$ and a {\em proximal parameter} $r>0,$ the \emph{proximal mapping}, denoted $\Prox_f^r(\bar{x})$, is defined by
    $$\Prox_f^r(\bar{x})=\argmin\limits_y\left\{f(y)+\frac{r}{2}\|y-\bar{x}\|^2\right\}.$$
\end{df}

\subsection{Assumptions}\label{ssec:assumptions}

Throughout this paper, we assume the following for Problem \eqref{prob1}.
\begin{ass}\label{ass:finitemax}
The objective function $f:\R^n\to\R$ is convex and defined through the maximum of a finite number of subfunctions,
    $$f(x)=\max_{i=1, ..., m} f_i(x),$$
where each $f_i \in \mathcal{C}^{2+}.$ Furthermore, at each given point $\bar{x}$ the grey-box returns the individual function values $f_i(x)$ and as such also provides indices of active subfunctions, i.e., 
	$$A(\bar{x})=\{i:f_i(\bar{x})=f(\bar{x})\}.$$
\end{ass}
\begin{ass}\label{ass:compactlowerlevel}
The objective function $f$ has compact {\em lower level sets}, that is, the set
$$S_\beta=\{x:f(x)\leq\beta\}$$
is compact for any choice of $\beta\in\R.$
\end{ass}
\begin{ass}\label{ass:affineindep}
For any fixed $\bar{x}\in\R^n,$ the set of {\em active gradients} $$\{\nabla f_i(\bar{x}) : i \in A(\bar{x}) \}$$
is affinely independent. That is, the only scalars $\lambda_i$ that satisfy
    $$\sum\limits_{i\in A(\bar{x})}\lambda_i\nabla f_i(\bar{x})=0,~\sum\limits_{i\in A(\bar{x})}\lambda_i=0$$
are $\lambda_i=0$ for all $i\in A(\bar{x}).$
\end{ass}

\section{Background and $\VU$-theory}\label{sec:backgroundandVU}

At any point $x\in\R^n$, the space can be split into two orthogonal subspaces called the $\U$-space and the $\V$-space, such that the nonsmoothness of $f$ is captured entirely in the $\V$-space, while on the $\U$-space $f$ behaves smoothly. 
The $\VU$-method tracks a smooth trajectory of $f$ along which
a Newton-like update can be done, even though the function is not differentiable everywhere. 
The smooth trajectory is special in the sense that its $\VU$-decomposition
has a $\V$-component that converges faster than its $\U$-component. 
Along the smooth trajectory, the rate of
convergence is driven by the speed of the $\U$-component, which is updated using a (fast) Newton step.
This explains the superlinear speed of convergence  of conceptual
$\VU$-methods under certain assumptions (such
as having perfect knowledge of the full subdifferential of $f$ at a minimizer and of the
matrices involved in a second-order expansion of the smooth trajectory); see Section \ref{ssec:conceptualVU}.

The algorithmic identification of the smooth trajectories is possible thanks to two useful relations established in \cite{correaconvergence1993} and \cite{mifflin-sagastizabal-2002}.
Specifically, the first work shows that a bundle mechanism gives asymptotically the exact value of the proximal point operator at a given point, for oracles delivering subgradient information. The second work, see Theorem~\ref{thm:pri}, relates proximal points with the smooth trajectory. A sound combination of these elements gives an implementable form to the conceptual $\VU$-algorithm in Section~\ref{ssec:conceptualVU}. Our contribution extends the relations above to the grey-box oracle in Assumption~\ref{ass:finitemax}, by
suitably coupling those bundle-method results with DFO techniques to derive the implementable DFO $\VU$-algorithm in Section~\ref{DFOVU-alg}.

The main relations and formal definitions of the $\VU$-decomposition, the $\U$-Lagrangian that yields the smooth trajectories, and the proximal point mapping are recalled below.

\begin{df}
Fix $\bar{x}\in\R^n$ and let $g\in\ri(\partial f(\bar{x})).$ The \emph{$\VU$-decomposition} of $\R^n$ for $f$ at $\bar{x}$ is the separation of $\R^n$ into the following two orthogonal subspaces:
    $$\V(\bar{x})=\Span\{\partial f(\bar{x})-g\}\qquad\mbox{ and }\qquad U(\bar{x})=[\V(\bar{x})]^\perp.$$
\end{df}
\noindent This decomposition is independent of the choice of $g\in\ri\partial f(\bar{x})$ \cite[Proposition 2.2]{ulagconv}. With $\OV\in\R^{n\times\dim\V}$ a basis matrix for the $\V$-space and $\OU\in\R^{n\times\dim\U}$ an orthonormal basis matrix for the $\U$-space, every $x\in\R^n$ can be decomposed into components $x_{\U}\in\R^{\dim\U}$ and $x_{\V}\in\R^{\dim\V}$ \cite[Section 2]{ulagconv}. Defining
    $$x_{\U}=\OU^\top x\mbox{ \qquad and\qquad } x_{\V}=\left(\OV^\top\OV\right)^{-1}\OV^\top x,$$
we write
    $$x=
\OU x_{\U}+\OV x_{\V}.$$
Henceforth, we use the notation $\R^{|\U|}$ and $\R^{|\V|}$ to represent $\R^{\dim\U}$ and $\R^{\dim\V},$ respectively.

Given a subgradient $g\in\partial f(\bar{x})$ with $\V$-component $g_{\V},$ the \emph{$\U$-Lagrangian} of $f,$ $L_{\U}(u;g_{\V}):\R^{|\U|}\rightarrow\R$ is defined \cite[Section 2.1]{vualg} as follows:
    $$L_{\U}(u;g_{\V})=\min_{v\in\R^{|\V|}}\left\{f\left(\bar{x}+\OU u+\OV v\right)-g^\top\OV v\right\}.$$
The associated set of $\V$-space minimizers is
    $$\begin{array}{rcl}
		W(u;g_{\V})&=&\argmin_{v\in\R^{|\V|}}\left\{f\left(\bar{x}+\OU u+\OV v\right)-g^\top\OV v\right\}\\
		&=&\{\OV v:L_{\U}(u;g_{\V})=f(\bar{x}+\OU u+\OV v)-g^\top\OV v\}.
		\end{array}$$
The $\U$-gradient $\nabla L_{\U}(0;g_{\V})$ and the $\U$-Hessian $\nabla^2L_{\U}(0;g_{\V})$ are then defined as the gradient and Hessian, respectively, of the $\U$-Lagrangian. For $f$ convex, each $\U$-Lagrangian is a convex function that is differentiable at $u=0,$ with
    $$\nabla L_{\U}(0;g_{\V})=g_{\U}=\OU^\top g=\OU^\top\tilde{g}\mbox{ for all }\tilde{g}\in\partial f(x)\qquad\mbox{\cite[Section 2.1]{vualg}.}$$  If $L_{\U}(u;g_{\V})$ has a Hessian at $u=0,$ then the second-order expansion of $L_{\U}$ also provides a second-order expansion of $f$ in the $\U$-space, which thereby allows for a so-called $\U$-Newton step.  General conditions for existence of the $\U$-Hessian are found in \cite{vualg}.  However, for the purpose of this paper, we note that Assumptions \ref{ass:finitemax} and \ref{ass:affineindep}, combined with $g\in\ri\partial f(x),$ imply the existence of a $\U$-Hessian at the origin \cite[Lemma 2.6, Lemma 3.5]{numvu}. When $\bar{x}$ minimizes $f,$ we have $0\in\partial f(\bar{x})$ \cite[Section 2.1]{vualg}. This gives $$\nabla L_{\U}(0;g_{\V})=0\mbox{ for all }g\in\partial f(\bar{x}),$$ and $L_{\U}$ is minimized at $u=0,$ which subsequently yields $L_{\U}(0;0)=f(\bar{x}).$

\subsection{Relation with the proximal point operator}

The second-order expansion of $f$ in the $\U$-space allows the $\VU$-algorithm to take $\U$-Newton steps, which in turn allows for rapid convergence.  However, in order to be effective, the algorithm must seek out iterates where the $\U$-space at the iterate lines up with the $\U$-space at the minimizer.  This is accomplished through the proximal point operation. When $f$ is convex, the proximal mapping $\Prox_f^r$ is a singleton, called the proximal point. When computed close to
a minimizer $\bar x$, the proximal point has a very important relationship with the smooth
trajectory provided by the $\U$-Lagrangian minimizers, called primal track in
\cite[\S1]{vusmoothness}. 

As shown in \cite[Corollary 3.5]{ulagconv}, for sufficiently small $u$ the trajectories created
from the set of $\V$-space minimizers, that is \(\bar{x}+\OU u+\OV v(u)\),
are smooth and are tangent to $\U$ at $\bar{x}$, because  $v(u)=O(\|u\|^2)$. 
When, in addition,
the Hessian of $L_{\U}(u;0)$ exists at $u=0$ (see \cite[Definition 3.8]{ulagconv} and the preamble), 
the second-order expansion of $L_{\U}$ is possible \cite[Section 2.2]{vualg}. 
Lemma 3 of \cite{vualg} shows that in that case, the derivative of the trajectory provides
a $\mathcal{C}^1$ $\U$-gradient.  

The connection with the proximal point is given by the following very useful equivalence.

\begin{thm}\emph{\cite[Theorem 4]{vualg}}\label{thm:pri} Let $\chi(u)$ be a primal track leading to a minimizer $\bar{x}\in\R^n.$ Suppose that $0\in\partial f(\bar{x})$ and that we have a function $r(x)>0$ such that $r(x)\|x-\bar{x}\| \to 0$ when $x\to \bar{x}$.  Define $u_r(x)=(\Prox_f^r(x)-\bar{x})_{\U}.$ Then for all $x$ close enough to $\bar{x}$ and $r=r(x)$, we have that 
	$$\Prox_f^r(x)=\chi(u_r(x))=\bar{x}+u_r(x)\oplus v(u_r(x)).$$
Moreover, $u_r(x)\to0$ as $x\to\bar{x}.$
\end{thm}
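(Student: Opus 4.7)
The plan is to combine the first-order optimality condition for $\Prox_f^r$ with the variational characterization of the primal track, using the finite-max structure to pin $p:=\Prox_f^r(x)$ onto $\chi$. The prox condition reads $\tilde g:=r(x-p)\in\partial f(p)$. Because $0\in\partial f(\bar x)$, the point $\bar x$ is a fixed point of $\Prox_f^r$, so firm non-expansiveness gives $\|p-\bar x\|\le\|x-\bar x\|$, and in particular $p\to\bar x$ as $x\to\bar x$. Consequently $\|\tilde g\|\le 2r(x)\|x-\bar x\|\to 0$ by the hypothesis on $r(x)$.

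Decomposing $p-\bar x=\OU u_p+\OV v_p$ with $u_p=(p-\bar x)_\U$, continuity yields $u_p\to 0$, which settles the final assertion of the theorem. The bulk of the argument is to upgrade $p\to\bar x$ into the identity $p=\chi(u_p)$, that is, $v_p=v(u_p)$. Orthogonality of $\U$ and $\V$ makes the prox problem decouple along the $\V$-direction: $v_p$ minimizes
$$\varphi(v):=f(\bar x+\OU u_p+\OV v)+\tfrac{r}{2}\|\OV(v-v_x)\|^2,\qquad v_x:=(x-\bar x)_\V,$$
while $v(u_p)$ is the unique minimizer of the map $v\mapsto f(\bar x+\OU u_p+\OV v)$, characterized by the existence of some $h\in\partial f(\chi(u_p))$ with $\OV^\top h=0$. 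The plan is to prove that $\partial f(p)$ itself contains such an $h$, and then use strict convexity of the restricted map (valid under Assumption \ref{ass:affineindep}) to conclude $v_p=v(u_p)$.

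Exhibiting a zero-$\V$-component subgradient at $p$ is the heart of the argument and is where the finite-max structure together with Assumption \ref{ass:affineindep} plays a role. Upper semicontinuity of the active-set map gives $A(p)\subseteq A(\bar x)$ for $x$ near $\bar x$; showing $A(p)=A(\bar x)$ in the small-$r$ regime would yield $\partial f(p)=\conv\{\nabla f_i(p):i\in A(\bar x)\}$, and then affine independence together with the fact that $\tilde g\in\partial f(p)$ has a vanishing $\V$-component lets one produce the required $h\in\partial f(p)$ with $\OV^\top h=0$. Combined with local smoothness of $\chi$ from \cite[Corollary 3.5]{ulagconv} and the identity $v(0)=0$, this yields $p=\chi(u_p)=\bar x+u_r(x)\oplus v(u_r(x))$.

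The main obstacle is justifying $A(p)=A(\bar x)$: one must exploit $r(x)\|x-\bar x\|\to 0$ to guarantee that the prox step is gentle enough that no active index is lost. Intuitively the hypothesis makes $p$ cling to the kink of $f$ along the primal track; quantitatively it reduces to comparing the change of each difference $f_i-f_j$ across a $\V$-step against the size of the quadratic penalty $\tfrac{r}{2}\|\OV(v-v_x)\|^2$, and would invoke the $\mathcal C^{2+}$ regularity of the $f_i$ together with Assumption \ref{ass:affineindep} and the strict separation $f_k(\bar x)<f(\bar x)$ for $k\notin A(\bar x)$. Once this stability is in hand, the remainder is standard convex-analysis bookkeeping.
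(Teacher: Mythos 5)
First, a point of reference: the paper does not prove Theorem~\ref{thm:pri} at all --- it is imported verbatim from \cite[Theorem 4]{vualg}, where it is established for general convex functions admitting a primal track, using the properties packaged into the primal-track definition rather than the finite-max structure you lean on. So your proposal has to stand on its own, and as it stands it is a reduction rather than a proof. You correctly set up the skeleton: prox optimality gives $\tilde g=r(x-p)\in\partial f(p)$ with $\|\tilde g\|\le 2r(x)\|x-\bar x\|\to 0$, firm nonexpansiveness gives $p\to\bar x$ and hence $u_r(x)\to 0$, and if one can exhibit $h\in\partial f(p)$ with $\OV^\top h=0$ and know that $W(u_p;0)$ is a singleton, then $p=\chi(u_p)$. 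But the step that carries the entire content of the theorem --- that the prox point keeps the full active set, $A(p)=A(\bar x)$, equivalently $0\in\OV^\top\partial f(p)$ --- is precisely what you defer as ``the main obstacle,'' with only a heuristic plan (comparing increments of $f_i-f_j$ against the quadratic penalty) that is neither carried out nor the natural mechanism. The natural mechanism is the bound you already derived but then abandon: if $0\in\ri\partial f(\bar x)$, then $0$ has positive distance $\rho$ to every proper face $\conv\{\nabla f_i(\bar x):i\in S\}$, $S\subsetneq A(\bar x)$; by continuity of the gradients and upper semicontinuity of the active set, any element of $\conv\{\nabla f_i(p):i\in A(p)\}$ with $A(p)\subsetneq A(\bar x)$ has norm at least $\rho/2$ for $p$ near $\bar x$, while $\tilde g\in\partial f(p)$ has norm tending to zero; hence $A(p)=A(\bar x)$, and $0\in\OV^\top\partial f(p)$ then follows from Assumption~\ref{ass:affineindep} (full $\V$-dimensionality of the approximate simplex) and continuity. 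Note also that this argument genuinely needs $0\in\ri\partial f(\bar x)$, which comes with the primal-track hypotheses of \cite{vualg}; the bare assumption $0\in\partial f(\bar x)$ that you cite is not enough (consider $f(x)=\max\{x,0\}$ at $\bar x=0$).

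Two further inaccuracies in the write-up. You assert that ``$\tilde g\in\partial f(p)$ has a vanishing $\V$-component'': it does not --- its $\V$-component is merely small, and conflating the two hides exactly the gap described above. And affine independence of the active gradients does not make $v\mapsto f(\bar x+\OU u+\OV v)$ strictly convex (all $f_i$ affine is a counterexample), so uniqueness of $v(u)$ cannot be obtained the way you claim; it must be drawn from the primal-track definition and the results of \cite{ulagconv} on single-valuedness of the $\V$-space minimizer near $u=0$. With the active-set stability argument supplied and these two points repaired, your outline would become a correct finite-max specialization of the cited theorem.
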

\noindent In Theorem \ref{thm:pri}, $r(x)$ plays the role of a prox-parameter that can be dynamically selected within an algorithm (provided $r(x)\|x-\bar{x}\| \to 0$ when $x\to \bar{x}$).  The conclusion of Theorem \ref{thm:pri} allows us to concentrate on finding the proximal point instead of being concerned about how to find the primal track, since close to $\bar{x}$ they are one and the same.  Moreover, note that Theorem \ref{thm:pri} does not require $r(x)$ to be constant.  This provides valuable flexibility that greatly improves numerical performance in $\VU$-algorithms.  Finally, note that a routine for finding the proximal point of a convex function at a given point already exists \cite{kiwiel1990proximity}. We give a brief summary of the method next. 

\subsection{Computing proximal points}

Given a convex function $f$ and an initial point $y_0,$ at iteration $j$ of the bundle routine we choose any subgradient $g_j\in\partial f(y_j)$ and define the \emph{linearization error}:
$$E_j=E(x,y_j)=f(x)-f(y_j)-g_j^\top(x-y_j).$$Since $f$ is convex, $E_j\geq0$ for all $j.$ Using the fact that $f(z)\geq f(y_j)+g_j^\top(z-y_j)$ for all $z\in\R^n,$ we have that$$f(z)\geq f(x)+g_j^\top(z-x)-E_j\mbox{ for all }z\in\R^n.$$In other words, $g_j\in\partial_{E_j}f(x).$ The bundle $\{(E_j,g_j)\}_{j\in\C},$ where $\C$ is a set that indexes information from previous iterations, is used to construct a convex piecewise-linear function $\varphi_j$ that approximates and minorizes $f.$ Then the new iteration point $y_{j+1}=\Prox_{\varphi_j}^r(y_j)$ is found, and the process repeats. This method is proved in \cite{correaconvergence1993} to converge to $\Prox_f^r(y_0).$

The cutting-plane model $\varphi_j$ uses subgradient information that is not available in our case.
In the DFO setting, subgradients will be estimated by means of certain \emph{simplex gradients}
using functional information only; see Section~\ref{sec:approx}.
\subsection{The $\VU$-algorithm}\label{ssec:conceptualVU}

When a primal track exists, the $\VU$-algorithm takes a step approximately following the primal track by way of a \emph{predictor step ($\U$-step)}, which is a Newton-like step parallel to the $\U$-space, followed by a \emph{corrector step ($\V$-step)}, which is a bundle subroutine estimate of the proximal point in the $\V$-space. The $\V$-step outputs a potential primal track point, which is then checked and either accepted or rejected, depending on whether sufficient descent is achieved. We now state an abbreviated version of the conceptual $\VU$-algorithm presented in \cite{vualg}. \medskip

\noindent{\bf Conceptual $\VU$-algorithm}\medskip

\noindent Step 0: Initialize the starting point $x_0$, proximal parameter $r>0$, iteration counter $k=0$ and other parameters.\medskip

\noindent Step 1: Given $g\in\partial f(x_k),$ compute the $\VU$-decomposition with subspace bases $\OV$ and $\OU.$\medskip

\noindent Step 2: Compute an approximate proximal point $x_{k+1} \approx \Prox_f^r(x^k)$. Increment $k\mapsto k+1.$\medskip

\noindent Step 3: If $x_k$ does not show sufficient descent, then declare a null step and repeat Step 2 to higher precision. If $x_k$ does show sufficient descent, then check stopping conditions and either stop or continue to Step 4.\medskip

\noindent Step 4: Compute the $\U$-gradient $\nabla L_{\U}(0;g_{\V})$ and $\U$-Hessian $\nabla^2L_{\U}(0;g_{\V}).$  Take a $\U$-Newton step by solving
	$$\nabla^2L_{\U}(0;g_{\V})\Delta u=-\nabla L_{\U}(0;g_{\V})$$
for $\Delta u$ and setting
  $$x_{k+1}=x_k+\OU\Delta u.$$
Increment $k\mapsto k+1$, update $r$, and go to Step 1.\medskip

\noindent\textbf{End algorithm.}

\section{Defining inexact subgradients and related approximations}\label{sec:approx}

We now consider how to make implementable the conceptual $\VU$-algorithm in a derivative-free setting as provided by Assumptions \ref{ass:finitemax} through \ref{ass:affineindep}.  In order to prove convergence, we make use of the results of \cite{numvu} and \cite{hareplaniden2016}. We use the techniques in \cite{numvu} to approximate a subgradient, the $\VU$-decomposition, the $\U$-gradient and the $\U$-Hessian for the function $f$ at a point $\bar{x}$.\par
To define an inexact subgradient for $f,$ we make use of the \emph{simplex gradients} of each $f_i.$ The simplex gradient is defined as the gradient of the approximation resulting from a linear interpolation of $f$ over a set of $n+1$ points in $\R^n$ \cite{Kelley99}.
\begin{df}Let $Y = [ y_0, y_1, \dots ,y_n ]$ be a set of affinely independent points in $\R^n$.  Then it is said that \emph{$Y$ forms a simplex}, with {\em simplex diameter} $$\varepsilon= \max_{j=1, \dots n}\|y_j - y_0\|.$$
The {\em simplex gradient} of a function $f_i$ over $Y$ is given by
    $$\nabla_\varepsilon f_i(Y) = M^{-1} \delta f_i(Y),$$
where
    $$M= \bigg[ y_1 - y_0 \cdots y_n-y_0 \bigg]^\top, \mbox{ and}\quad\delta f_i(Y) = \left[ {\begin{array}{cc}
			f_i(y_1) - f_i(y_0)\\
			\vdots\\
			f_i(y_n)-f_i(y_0)
			\end{array}} \right].
    $$
The {\em condition number} of $Y$ is given by $\|\hat{M}^{-1}\|$, where
    $$\hat{M} = \frac{1}{\varepsilon} [y_1-y_0~~y_2-y_0~~ \dots ~~y_n-y_0]^\top.$$\end{df}
\noindent An important aspect of the condition number is that it is always possible to keep it bounded away from zero while simultaneously making $\varepsilon$ arbitrarily close to zero (see Remark \ref{rem:zero}). The following result provides an error bound for the distance between the simplex gradient and the exact gradient for a smooth function.

\begin{thm} \emph{\cite[Lemma 6.2.1]{Kelley99}}\label{errorbound}
Consider $f_i \in \mathcal{C}^{2}$.  Let $Y=[y_0,y_1, \dots, y_n]$ form a simplex.  Then there exists $\mu$ constant depending on $n$ and the local Lipschitz constant of $\nabla f_i$ such that
    \[\| \nabla_\varepsilon f_i(Y) - \nabla f_i(y_0)\| \le \varepsilon\mu \| \hat{M}^{-1} \|. \]
\end{thm}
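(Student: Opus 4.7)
The plan is to build the bound from a componentwise Taylor expansion of $f_i$ around $y_0$, then invert $M$. First I would fix the simplex $Y=[y_0,y_1,\dots,y_n]$ and, for each $j=1,\dots,n$, apply a second-order Taylor expansion of $f_i$ at $y_0$ along the direction $y_j-y_0$. Since $f_i\in\mathcal{C}^2$, the integral form of the remainder gives
\[
f_i(y_j)-f_i(y_0)-\nabla f_i(y_0)^\top(y_j-y_0)\;=\;r_j,
\]
where $|r_j|\le \tfrac{L}{2}\|y_j-y_0\|^2\le \tfrac{L}{2}\varepsilon^2$ and $L$ is the local Lipschitz constant of $\nabla f_i$ on a neighborhood of $y_0$ containing the simplex.

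Next I would stack these $n$ scalar identities into the vector equation $\delta f_i(Y)=M\,\nabla f_i(y_0)+r$, where the residual vector $r=(r_1,\dots,r_n)^\top$ satisfies $\|r\|\le \tfrac{L\sqrt{n}}{2}\varepsilon^2$ (using the Euclidean norm bound on a vector of $n$ entries each at most $\tfrac{L}{2}\varepsilon^2$). Multiplying on the left by $M^{-1}$ and using the definition $\nabla_\varepsilon f_i(Y)=M^{-1}\delta f_i(Y)$ yields
\[
\nabla_\varepsilon f_i(Y)-\nabla f_i(y_0)\;=\;M^{-1}r.
\]

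Finally I would convert the $M^{-1}$ factor to $\hat{M}^{-1}$ via the scaling $M=\varepsilon\hat{M}$, so $\|M^{-1}\|=\varepsilon^{-1}\|\hat{M}^{-1}\|$, and combine with the residual bound:
\[
\|\nabla_\varepsilon f_i(Y)-\nabla f_i(y_0)\|\;\le\;\|M^{-1}\|\,\|r\|\;\le\;\varepsilon^{-1}\|\hat{M}^{-1}\|\cdot\tfrac{L\sqrt{n}}{2}\varepsilon^2\;=\;\varepsilon\,\mu\,\|\hat{M}^{-1}\|,
\]
with $\mu=\tfrac{L\sqrt{n}}{2}$, which depends only on $n$ and on the local Lipschitz constant of $\nabla f_i$ as claimed.

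The only mildly delicate point is making sure the Lipschitz constant $L$ can be chosen uniformly for all the remainders $r_j$; this is handled by observing that every $y_j$ lies in the closed ball $\overline{B}_\varepsilon(y_0)$, so a single local Lipschitz constant on that ball suffices. Everything else is bookkeeping, so I would not expect any real obstacle—this is essentially the textbook argument in Kelley's book, just recorded in the paper's notation.
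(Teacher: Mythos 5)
Your argument is correct and is essentially the standard proof: the paper does not reprove this result but cites it directly from Kelley's book, and your componentwise Taylor expansion, stacking into $\delta f_i(Y)=M\nabla f_i(y_0)+r$, and rescaling via $M=\varepsilon\hat{M}$ is exactly the argument behind that cited lemma, yielding $\mu=\tfrac{L\sqrt{n}}{2}$ as claimed.
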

\noindent We set $y_0=\bar{x},$ and $y_1$ through $y_n$ to $\bar{x}+\varepsilon e_i$, where $e_i$ is the $i$\textsuperscript{th} canonical vector. If desired, a rotation matrix can be used to prevent the $y_i$ vectors from being oriented in the coordinate directions every time. Now we define Subroutine \ref{sub1}, which we use to find an approximate subgradient $g^\varepsilon,$ approximations of the subspace bases $\OV$ and $\OU$ and the approximate $\U$-gradient $\nabla_\varepsilon L_{\U}(0;g^\varepsilon_{\V}).$
\begin{sub}[First-order approximations]\label{sub1}\color{white}.\color{black}\medskip

\noindent Step 0: Input $\bar{x}$ and $\varepsilon.$\medskip

\noindent Step 1: Set $Y=[\bar{x}~~~\bar{x}+\varepsilon e_1~~~\bar{x}+\varepsilon e_2~~~\cdots~~~\bar{x}+\varepsilon e_n].$\medskip

\noindent Step 2: Find $A(\bar{x})$ and calculate $\nabla_\varepsilon f_i(Y)$ for each $i\in A(\bar{x}).$\medskip

\noindent Step 3: Set\begin{itemize}[wide=0pt]
\item[(i)]$g^\varepsilon=\frac{1}{|A(\bar{x})|}\sum\limits_{i\in A(\bar{x})}\nabla_\varepsilon f_i(Y);$
\item[(ii)]$\OV$ to be the matrix of column vectors $$\nabla_\varepsilon f_i(Y)-\nabla_\varepsilon f_I(Y)$$ for each $i\in A(\bar{x})\setminus\{I\},$ where $I$ is the first element of $A(\bar{x});$\medskip
\item[(iii)]$\OU=\nul\OV/|\nul\OV|;$\medskip
\item[(iv)]$\nabla_\varepsilon L_{\U}(0;g^\varepsilon_{\V})=\OU^\top g^\varepsilon.$
\end{itemize}
\end{sub}\noindent\textbf{End subroutine.}

\begin{rem}\label{rem:zero}
Using $Y$ from Step 1, we have
$$\hat{M}=\frac{1}{\varepsilon}[\bar{x}+\varepsilon e_1-\bar{x}~~\bar{x}+\varepsilon e_2-\bar{x}~~\cdots \bar{x}+\varepsilon e_n-\bar{x}]=\Id,$$ so that $\|\hat{M}^{-1}\|=1$ while $\varepsilon$ can be arbitrarily small.
\end{rem}

\begin{rem}\label{rem:epsilongradnotation}  By fixing $Y$ in Step 1, we have $g^\epsilon$ and $\nabla_\varepsilon L_{\U}(0;g^\varepsilon_{\V})$ defined directly using $\epsilon$.  This is done primarily to simplify notation.  If a more flexible implementation is desired, the notation $g^\epsilon(Y)$ and $\nabla_\varepsilon L_{\U}(0;g^\varepsilon_{\V})(Y)$ could be employed.
\end{rem}

\noindent The following theorem shows that the outputs $g^\varepsilon$ and $\nabla_\varepsilon L_{\U}(0;g^\varepsilon_{\V})$ from Subroutine \ref{sub1} are good approximations.
\begin{thm}\label{thm:approximateVU}
Let $f:\R^n\to\R$ satisfy Assumptions \ref{ass:finitemax} and \ref{ass:affineindep}. Fix $\bar{x}\in\dom f.$ Then there exist $\mu$ constant depending on $\bar{x}$ and $g\in\ri\partial f(\bar{x})$ such that for $\varepsilon > 0$ sufficiently small, one can obtain
\begin{itemize}[wide=0pt]
\item[(i)] an approximate subgradient $g^\varepsilon$ such that $$\|g^\varepsilon - g\| \leq\varepsilon\mu, \|g^\varepsilon_{\U} - g_{\U}\| \leq\varepsilon\mu\mbox{ and }\|g^\varepsilon_{\V} - g_{\V}\| \leq\varepsilon\mu;$$
\item[(ii)] the approximate $\U$-gradient $\nabla_\varepsilon L_{\U} (0;g^\varepsilon_{\V})$ such that $$\|\nabla_\varepsilon L_{\U} (0;g^\varepsilon_{\V}) - \nabla L_{\U} (0;g_{\V})\|\leq\varepsilon\mu.$$
\end{itemize}
\end{thm}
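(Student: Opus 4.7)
The plan is to chain the simplex-gradient error bound of Theorem \ref{errorbound} through the algebraic constructions in Subroutine \ref{sub1} step by step, after fixing a specific $g$ once and for all. I would take $g = \frac{1}{|A(\bar{x})|}\sum_{i\in A(\bar{x})} \nabla f_i(\bar{x})$, the centroid of the exact active gradients; by the convex-hull representation \eqref{convexhull} together with Assumption \ref{ass:affineindep}, this centroid lies in $\ri \partial f(\bar{x})$. Then, applying Theorem \ref{errorbound} to each $f_i$ with $i\in A(\bar{x})$ and using Remark \ref{rem:zero} to replace $\|\hat{M}^{-1}\|$ by $1$ for the $Y$ chosen in Step 1, there is a uniform constant $\mu_0 = \max_{i\in A(\bar{x})}\mu_i$ so that $\|\nabla_\varepsilon f_i(Y) - \nabla f_i(\bar{x})\| \leq \varepsilon\mu_0$. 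Averaging and the triangle inequality then give $\|g^\varepsilon - g\| \leq \varepsilon\mu_0$, the first of the three bounds in (i).

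For the $\U$ and $\V$ component bounds, the main difficulty is that the basis matrices produced in Steps 3(ii) and 3(iii) are themselves approximate, built from $\nabla_\varepsilon f_i(Y) - \nabla_\varepsilon f_I(Y)$ rather than from the exact differences. Each column of the approximate $\OV$ therefore differs from its exact counterpart by at most $2\varepsilon\mu_0$. Under Assumption \ref{ass:affineindep} the exact $\OV$ has full column rank, so its null-space basis (and hence $\OU$) depends Lipschitz continuously on the entries of $\OV$ in a neighbourhood of the exact basis; this matrix-perturbation sensitivity, already exploited in \cite{numvu}, produces constants $C_1,C_2$ so that $\|\OV^\varepsilon - \OV\| \leq C_1\varepsilon$ and $\|\OU^\varepsilon - \OU\| \leq C_2\varepsilon$ for $\varepsilon$ small. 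Splitting $g^\varepsilon_{\U} - g_{\U} = \OU^{\varepsilon\top} g^\varepsilon - \OU^\top g$ by adding and subtracting $\OU^\top g^\varepsilon$ then yields $\|g^\varepsilon_{\U} - g_{\U}\| \leq \|\OU^\varepsilon - \OU\|\|g^\varepsilon\| + \|g^\varepsilon - g\|$, which is $O(\varepsilon)$; an analogous argument, combined with continuity of matrix inversion on full-rank matrices to control $\|(\OV^{\varepsilon\top}\OV^\varepsilon)^{-1} - (\OV^\top\OV)^{-1}\|$, handles $\|g^\varepsilon_{\V} - g_{\V}\|$. Absorbing the various constants into a single $\mu$ completes (i).

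Part (ii) is then a short consequence: since for convex $f$ we have $\nabla L_{\U}(0;g_{\V}) = \OU^\top g$ for any subgradient $g$, the difference $\nabla_\varepsilon L_{\U}(0;g^\varepsilon_{\V}) - \nabla L_{\U}(0;g_{\V})$ is exactly $\OU^{\varepsilon\top} g^\varepsilon - \OU^\top g$, which is already handled by the splitting argument above and so is bounded by a constant multiple of $\varepsilon$. The main obstacle, as anticipated, is the middle paragraph: transferring the matrix-perturbation sensitivity of null-space and pseudoinverse maps cleanly to our setting. Assumption \ref{ass:affineindep} is doing the essential work there, since it keeps the computation bounded away from the rank-deficient regime where those matrix operations fail to be Lipschitz.
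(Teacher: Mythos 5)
Your proposal follows the same overall route as the paper: fix $g$ as a strictly positive convex combination of the exact active gradients, invoke Theorem \ref{errorbound} with Remark \ref{rem:zero} to get $\|\nabla_\varepsilon f_i(Y)-\nabla f_i(\bar{x})\|\leq\varepsilon\mu_1$, and transfer this bound through the constructions of Subroutine \ref{sub1}. The only differences are organizational. For the first bound the paper appeals to \cite[Lemma 2.6]{numvu}, which produces the unique weights $\lambda_i$ (unique by Assumption \ref{ass:affineindep}) pairing $g=\sum\lambda_i\nabla f_i(\bar{x})\in\ri\partial f(\bar{x})$ with $g^\varepsilon=\sum\lambda_i\nabla_\varepsilon f_i(Y)$; your centroid choice is the same object in substance, since Subroutine \ref{sub1} uses $\lambda_i=1/|A(\bar{x})|$. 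For the $\U$/$\V$-component bounds and for (ii), the paper simply cites \cite[Lemma 4.3]{numvu} and \cite[Theorem 5.3]{numvu}, whereas you sketch the underlying matrix-perturbation argument (closeness of the approximate $\OV$, hence of $\OU$, plus the splitting $\|g^\varepsilon_{\U}-g_{\U}\|\leq\|\OU^\varepsilon-\OU\|\,\|g^\varepsilon\|+\|g^\varepsilon-g\|$). That is essentially what the cited results do, so your argument is not wrong, but the one thin spot in your sketch is the claim $\|\OU^\varepsilon-\OU\|\leq C_2\varepsilon$: a null space has no canonical basis, so Lipschitz dependence of $\nul\OV$ on the entries of $\OV$ only controls the subspace (e.g., the orthogonal projector), not an arbitrary computed basis, and the componentwise bounds in (i)--(ii) require the approximate and exact bases to be selected consistently. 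Making that selection precise is exactly the technical content the paper imports from \cite{numvu}, so your proof would need either that citation or an explicit canonical basis construction to be complete.
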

\begin{proof} By Theorem \ref{errorbound} with $\|\hat{M}^{-1}\|=1$ as per Remark \ref{rem:zero}, there exists $\mu_1>0$ such that $\|\nabla_\varepsilon f_i(Y)-\nabla f_i(\bar{x})\|\leq\varepsilon\mu_1.$ Then by \cite[Lemma 2.6]{numvu}, there exist unique $\lambda_i\geq0$ with $\sum_{i\in A(\bar{x})}\lambda_i=1$ such that 
	$$g=\sum\limits_{i\in A(\bar{x})}\lambda_i\nabla f_i(\bar{x})\in\ri\partial f(\bar{x})
	\qquad\mbox{ and }\qquad 
	g^\varepsilon=\sum\limits_{i\in A(\bar{x})}\lambda_i\nabla_\varepsilon f_i(Y)\in\ri\partial_\varepsilon f(\bar{x})$$
are such that $\|g^\varepsilon-g\|\leq\varepsilon\mu_1.$ By \cite[Lem 4.3]{numvu} and \cite[Thm 5.3]{numvu}, we have the existence of $\mu_2,\mu_3>0$ such that $\|g^\varepsilon_{\U}-g_{\U}\|\leq\varepsilon\mu_2,$ $\|g^\varepsilon_{\V}-g_{\V}\|\leq\varepsilon\mu_2$ and $\|\nabla_\varepsilon L_{\U}(0;g^\varepsilon_{\V})-\nabla L_{\U}(0;g_{\V})\|\leq\varepsilon\mu_3.$ Setting $\mu=\max\{\mu_1,\mu_2,\mu_3\}$ completes the proof.
\end{proof}
\noindent Next, we find the approximate $\U$-Hessian $\nabla^2_\varepsilon L_{\U}(0;g^\varepsilon_{\V}),$ as outlined in \cite{numvu}. To do so, we need the Frobenius norm.
\begin{df}
The Frobenius norm $\|M\|_F$ of a matrix $M\in\R^{p\times q}$ with elements $a_{ij}$ is defined by
$$\|M\|_F=\sqrt{\sum\limits_{i=1}^p\sum\limits_{j=1}^qa_{ij}^2}.$$
\end{df}
\noindent We define the matrix $Z\in\R^{n\times(2n+1)}:$
	$$Z=[\bar{x}~~~\bar{x}+\varepsilon e_1~~~\bar{x}-\varepsilon e_1~~~\cdots~~~\bar{x}+\varepsilon e_n~~~\bar{x}-\varepsilon e_n].$$
To build an approximate Hessian of $f_i(\bar{x})$ for each $i\in A(\bar{x}),$ we solve the minimum Frobenius norm problem:
$$\{(H_i^*, D_i^*, C_i^*\}\in\argmin_{H, D, C}\left\{\|H_i\|_F:\frac{1}{2}z_j^\top H_i z_j + D_i^\top z_j+C_i=f_i(z_j) ~\mbox{for all} ~z_j \in Z\right\}$$ and set $\nabla^2_\varepsilon f_i(Z)=H_i^*$, where $z_j \in Z$ means $z_j$ is the $j$\textsuperscript{th} column of $Z$. The solution is obtained by solving a quadratic program. We then set
	$$H=\frac{1}{|A(\bar{x})|}\sum\limits_{i\in A(\bar{x})}\nabla^2_\varepsilon f_i(Z),$$
and define the approximate $\U$-Hessian of $f(\bar{x}):$ 
	$$\nabla^2_\varepsilon L_{\U}(0;g^\varepsilon_{\V})=\OU^\top H\OU.$$
The following result provides the error bound for the approximate Hessian.
\begin{thm}\emph{\cite[Theorem 6.1]{numvu}}\label{thm:hessbound}
Let $\bar{x}$ be fixed. Suppose that Assumption \ref{ass:affineindep} holds and that for any $\varepsilon>0$ there exists $\mu$ constant depending on $\bar{x}$ such that $\|\nabla_\varepsilon f_i(\bar{x})-\nabla f(\bar{x})\|<\varepsilon\mu$ and $\|\nabla^2_\varepsilon f_i(\bar{x})-\nabla^2f(\bar{x})\|<\varepsilon\mu.$ Then
	$$\|\nabla^2L_{\U}(0;g_{\V})-\nabla^2_\varepsilon L_{\U}(0;g_{\V}^\varepsilon)\|\leq\varepsilon\left[2\sqrt{2}\sqrt{|A(\bar{x})|-1}\| \OV^\dagger \|\|H\|(2\mu+\mu^2\varepsilon)+\mu\right],$$ 
where $\OV^\dagger$ represents the Moore-Penrose pseudo-inverse of $\OV$. 
Thus,
	$$\lim\limits_{\varepsilon\searrow0}\nabla^2_\varepsilon L_{\U}(0;g_{\V}^\varepsilon)=\nabla^2L_{\U}(0;g_{\V}).$$
\end{thm}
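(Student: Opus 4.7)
My plan is to expand both the exact and the approximate $\U$-Hessians in a common closed form derived from the $\V$-space minimisation that defines $L_{\U}$, and then bound the mismatch term by term. Under Assumptions \ref{ass:finitemax} and \ref{ass:affineindep}, solving the inner programme $\min_v f(\bar x+\OU u+\OV v)-g^\top\OV v$ reduces near $u=0$ to a convex quadratic programme over the active indices. Writing its first-order optimality conditions and invoking the implicit function theorem (whose non-degeneracy hypothesis is granted by affine independence of the active gradients) produces the smooth trajectory $v(u)=O(\|u\|^2)$ and hence a closed-form expression of the shape
$$\nabla^2 L_{\U}(0;g_{\V}) = \OU^\top \Bigl(\sum_{i\in A(\bar x)} \lambda_i\, \nabla^2 f_i(\bar x)\Bigr) \OU - Q,$$
where $Q$ is a quadratic correction term arising from differentiating the KKT conditions twice; its structure makes the Moore--Penrose pseudo-inverse $\OV^{\dagger}$ appear because $Q$ requires inverting the Gram matrix $\OV^\top\OV$. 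An analogous identity holds for $\nabla^2_\varepsilon L_{\U}(0;g^\varepsilon_{\V})=\OU^\top H\OU$, with $\nabla^2 f_i$, $\lambda_i$ and $\OV$ replaced by $\nabla^2_\varepsilon f_i$, $\lambda_i^\varepsilon$ and the approximate basis assembled in Subroutine~\ref{sub1}.

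Next I would apply the triangle inequality to split the error into three contributions: (i) the difference of the averaged Hessians $\OU^\top\bigl(\sum_i\lambda_i^\varepsilon\nabla^2_\varepsilon f_i-\sum_i\lambda_i\nabla^2 f_i\bigr)\OU$, which is directly bounded by the hypothesis $\|\nabla^2_\varepsilon f_i-\nabla^2 f_i\|<\varepsilon\mu$ together with an $O(\varepsilon)$ perturbation of the multipliers (delivered by Theorem~\ref{thm:approximateVU}(i) and Assumption~\ref{ass:affineindep}); (ii) the difference of the correction terms $Q-Q^\varepsilon$, expanded as a bilinear form in $(H-H^\varepsilon)$ and in the basis perturbation; and (iii) lower-order cross terms. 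The factor $\sqrt{|A(\bar x)|-1}$ should emerge from summing Frobenius contributions across the $|A(\bar x)|-1$ columns of $\OV$ (which is built from differences $\nabla_\varepsilon f_i(Y)-\nabla_\varepsilon f_I(Y)$ in Subroutine~\ref{sub1}(ii)); the factor $\|\OV^{\dagger}\|$ should come from inverting $\OV^\top\OV$; and the composite $2\sqrt{2}\,\|H\|(2\mu+\mu^2\varepsilon)$ should come from a two-step triangle inequality applied to a product of matrices each perturbed by $O(\varepsilon\mu)$.

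The main obstacle will be controlling the correction piece $Q-Q^\varepsilon$, because both the first-order gradient error $\|\nabla_\varepsilon f_i-\nabla f_i\|<\varepsilon\mu$ and the second-order error $\|\nabla^2_\varepsilon f_i-\nabla^2 f_i\|<\varepsilon\mu$ propagate simultaneously through a rational expression involving $(\OV^\top\OV)^{-1}$, so one must track carefully how each perturbation scales. Assumption~\ref{ass:affineindep} is critical here because it keeps $\|\OV^{\dagger}\|$ uniformly bounded in a neighbourhood of $\bar x$; without it the correction term could blow up. Once the three contributions are combined into a single bound, the stated inequality follows, and since each surviving summand carries an explicit factor of $\varepsilon$, sending $\varepsilon\searrow 0$ yields the limiting equality $\nabla^2_\varepsilon L_{\U}(0;g^\varepsilon_{\V})\to\nabla^2 L_{\U}(0;g_{\V})$.
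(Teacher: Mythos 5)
The paper itself gives no proof of this statement; it is imported verbatim from \cite[Theorem 6.1]{numvu}, so your attempt has to be measured against that argument. The central structural claim in your plan is incorrect: for a finite-max function under Assumptions \ref{ass:finitemax} and \ref{ass:affineindep}, with $g=\sum_{i\in A(\bar x)}\lambda_i\nabla f_i(\bar x)\in\ri\partial f(\bar x)$, writing $L_{\U}(u;g_{\V})=\sum_{i\in A(\bar x)}\lambda_i f_i(\chi(u))-g^\top \OV v(u)$ along the track and differentiating twice, the tangency $v'(0)=0$ together with $\sum_i\lambda_i\nabla f_i(\bar x)=g$ makes every term involving $v''(0)$ cancel, so $\nabla^2L_{\U}(0;g_{\V})=\OU^\top\bigl(\sum_{i\in A(\bar x)}\lambda_i\nabla^2 f_i(\bar x)\bigr)\OU$ \emph{exactly}: there is no surviving KKT correction term $Q$, and no $(\OV^\top\OV)^{-1}$ enters the formula. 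This identity (the cited prerequisite in \cite{numvu}) is precisely what legitimises the simple construction $\OU^\top H\OU$ of Subroutine \ref{sub2}. Consequently your attribution of the factors $\|\OV^\dagger\|$, $\sqrt{|A(\bar x)|-1}$ and $\|H\|$ to a difference $Q-Q^\varepsilon$ locates the error in the wrong place. Those factors come from the fact that the exact and approximate $\U$-Hessians are built in \emph{different bases}: the exact $\OU$ spans $\nul\OV$, while the approximate basis spans the null space of the matrix $\OV_\varepsilon$ of simplex-gradient differences. The bound is obtained by splitting $\|\OU^\top\bar{H}\OU-\OU_\varepsilon^\top H\OU_\varepsilon\|$ into (a) $\|\OU^\top(\bar{H}-H)\OU\|\leq\varepsilon\mu$, using orthonormality of $\OU$ and the hypothesis on the Hessian errors (this is the trailing $\mu$), and (b) a change-of-basis term controlled by a null-space (projection) perturbation bound of the type $\|\OU\OU^\top-\OU_\varepsilon\OU_\varepsilon^\top\|\lesssim\|\OV^\dagger\|\,\|\OV-\OV_\varepsilon\|$, where each of the $|A(\bar x)|-1$ columns of $\OV-\OV_\varepsilon$ is a difference of two gradient errors of size $\varepsilon\mu$; this is what generates $2\sqrt{2}\sqrt{|A(\bar x)|-1}\,\|\OV^\dagger\|\,\|H\|(2\mu+\mu^2\varepsilon)$.

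Two smaller points. First, no perturbation analysis of the multipliers is needed: Subroutine \ref{sub1} uses the equal weights $\lambda_i=1/|A(\bar x)|$, and the exact comparison subgradient $g$ in Theorem \ref{thm:approximateVU} is taken with those same weights, so $\bar{H}$ and $H$ carry identical coefficients. Second, invoking the implicit function theorem to re-derive the track is unnecessary machinery here; if you do carry it out carefully you will find the cancellation above, which collapses your three-way decomposition to the two-term split just described. As written, your argument would not reproduce the stated inequality; the repair is to first establish (or cite) the projected-Lagrangian-Hessian identity and then confine all $\|\OV^\dagger\|$-dependence to the subspace-perturbation step. Your concluding limit argument is fine once the bound is in hand, since every term carries an explicit factor of $\varepsilon$.
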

\noindent Now we state Subroutine \ref{sub2}, which is used to find the approximate $\U$-Hessian of $f$ at $\bar{x}.$
\begin{sub}[Second-order approximation]\label{sub2}\color{white}hi\color{black}\medskip

\noindent Step 0: Input $\bar{x},$ $\varepsilon,$ $A(\bar{x})$ and $\OU.$\medskip

\noindent Step 1: Set $Z=[\bar{x}~~~\bar{x}+\varepsilon e_1~~~\bar{x}-\varepsilon e_1~~~\cdots~~~\bar{x}+\varepsilon e_n~~~\bar{x}-\varepsilon e_n].$\medskip

\noindent Step 2: Calculate $\nabla^2_\varepsilon f_i(Z)$ for each $i\in A(\bar{x}).$\medskip

\noindent Step 3: Set $\nabla^2_\varepsilon L_{\U}(0;g^\varepsilon_{\V})=\OU^\top\left(\frac{1}{|A(\bar{x})|}\sum\limits_{i\in A(\bar{x})}\nabla^2_\varepsilon f_i(Z)\right)\OU.$\medskip

\end{sub}\noindent\textbf{End subroutine.}

\begin{rem} Similar to Subroutine \ref{sub1}, by fixing $Z$ in Step 1, there is no need to put it in the notation for our approximate $\U$-Hessian.  If a more flexible algorithm were desired, then the notation $\nabla^2_\varepsilon L_{\U}(0;g^\varepsilon_{\V})(Z)$ could be used.
\end{rem}

Theorems \ref{thm:approximateVU} and \ref{thm:hessbound} provide us with the tools needed to perform the approximate $\U$-step in the derivative-free $\VU$-algorithm.  In order to perform the approximate $\V$-step, we need to be able to approximate a proximal point in a derivative-free setting. A subroutine that accomplishes this, called the \emph{tilt-correct DFO proximal bundle method}, was introduced in \cite{hareplaniden2016}. Details are reproduced in Step 2 of the DFO $\VU$-algorithm below. At iteration $j$ of said subroutine, a subgradient is approximated by modelling $f$ with a piecewise-linear function $\varphi_j$ and then finding the proximal point of $\varphi_j$. This method is proved in \cite{hareplaniden2016} to converge to the desired proximal point within a preset tolerance. Theorem \ref{thm:approximateVU}(i) provides the approximate subgradients required for this step.\par The tilt-correct DFO proximal bundle method involves a possible correction to the approximate subgradient found at each iteration (Step 1.1 of the upcoming DFO $\VU$-algorithm), which ensures that the model function value at the current iterate $x_k$ is not greater than the objective function value at $x_k$. This is not a concern when exact subgradients are available, because then the model function naturally bounds the (convex) objective function from below, but when using approximate subgradients it is possible for the model function to lie partially above the objective function. In that case, tilting the linear piece down until the model and true function values are consistent at $x_k$ 
makes the model no worse \cite[Lemma 3.1]{hareplaniden2016}. The tilt procedure is explained in \cite[\S 3.1]{hareplaniden2016}. Suffice it to say here that once $g^\varepsilon$ is found, it can be replaced by the approximate subgradient defined by \eqref{tilteq}, which complies with all of our requirements.

\subsection{The DFO $\VU$-Algorithm}\label{DFOVU-alg}
In the following algorithm, we use $k$ for the outer counter and $j$ for the inner ($\V$-step subroutine) counter.  Henceforth, we refer to this algorithm as \Vd. \\

\noindent Step 0: \emph{Initialization.}  Choose a stopping tolerance $\delta \geq 0$, an accuracy tolerance $\varepsilon_{\tt min} \geq 0$ for the subgradient errors, a descent-check parameter $m\in(0,1)$ and a proximal parameter $r>0.$ Choose an initial point $x_0\in\dom f$ and an initial subgradient accuracy $\varepsilon_0\geq 0.$ Set $k=0.$\medskip

\noindent Step 1: \emph{$\V$-step.}
\begin{itemize}
\item[]Step 1.0: \emph{Initialization.} Set $j=0,$ $z_0=x_k$ and $\C_0=\{0\}.$
\item[]Step 1.1: \emph{Linearization.} Call Subroutine \ref{sub1} with input $(z_j,\varepsilon_k)$ to find $\tilde{g}_j^{\varepsilon_k}.$ Compute $E_j=f(z_j)+\tilde{g}_j^{\varepsilon_k\top}(z_0-z_j)-f(z_0)$ and set
\begin{equation}\label{tilteq}g_j^{\varepsilon_k}=\tilde{g}_j^{\varepsilon_k}+\max(0,E_j)\frac{z_0-z_j}{\|z_0-z_j\|^2}.\end{equation}
\item[]Step 1.2: \emph{Model.} Define
$$\varphi_j^{\varepsilon_k}(z)=\max\limits_{i\in\C_j}\left\{f(z_i)+g_i^{\varepsilon_k\top}(z-z_i)\right\}.$$
\item[]Step 1.3: \emph{Proximal Point.} Calculate $z_{j+1}=\Prox_{\varphi_j^{\varepsilon_k}}^r(z_0).$
\item[]Step 1.4: \emph{Stopping Test.} If $f(z_{j+1})-\varphi_j^{\varepsilon_k}(z_{j+1})\leq\varepsilon_k^2/r,$ set $x_{k+1}=z_{j+1},$ calculate the aggregate subgradient of the model function: $s_{k+1}=r(z_0-z_{j+1})$, and go to Step 2.\medskip
\item[]Step 1.5: \emph{Update and Loop.} Create the aggregate bundle element $$(z_{j+1},\varphi_j^{\varepsilon_k},r(z_0-z_{j+1})).$$ Create $\C_{j+1}$ such that $\{-1,0,j+1\}\subseteq \C_{j+1}\subseteq\{-1,0,1,2,\cdots,j+1\}.$ Increment $j\mapsto j+1$ and go to Step 1.1.
\end{itemize}
\noindent Step 2: \emph{Stopping Test.} If $\|s_{k+1}\|^2\leq\delta\mbox{ and }\varepsilon_k\leq\varepsilon_{\tt min},$ output $x_{k+1}$ and stop.\medskip

\noindent Step 3: \emph{Update and Loop.}
\begin{itemize}
\item[]Case 3.1: If $f(x_k)-f(x_{k+1})\geq\frac{m}{2r}\|s_{k+1}\|^2$ and $\|s_{k+1}\|^2\leq\delta$ and $\varepsilon_k>\varepsilon_{\tt min},$ declare SERIOUS STEP and set $\varepsilon_{k+1}=\varepsilon_k/2.$\medskip
\item[]Case 3.2: If $f(x_k)-f(x_{k+1})\geq\frac{m}{2r}\|s_{k+1}\|^2$ and $\|s_{k+1}\|^2>\delta,$ declare SERIOUS STEP and set $\varepsilon_{k+1}=\varepsilon_k.$\medskip
\item[]Case 3.3: If $f(x_k)-f(x_{k+1})<\frac{m}{2r}\|s_{k+1}\|^2,$ declare NULL STEP and set $\varepsilon_{k+1}=\varepsilon_k/2.$
\end{itemize}
Increment $k\mapsto k+1.$ If SERIOUS STEP, go to Step 4. If NULL STEP, go to Step 1.\medskip

\noindent Step 4: \emph{$\U$-step.} Call Subroutine \ref{sub1} with input $(x_k,\varepsilon_k)$ to find $A(x_k),$ $g_k^{\varepsilon_k},$ $\OU_k$ and $\nabla_\varepsilon L_{\U}(0;(g_k^{\varepsilon_k})_{\V}).$ Call Subroutine \ref{sub2} with input $(x_k,\varepsilon_k,A(x_k),\OU_k)$ to find $\nabla^2_\varepsilon L_{\U}(0;(g_k^{\varepsilon_k})_{\V}).$ Compute an approximate $\U$-quasi-Newton step by solving the linear system
\begin{equation}\label{eq:ustepalg}\nabla^2_\varepsilon L_{\U}(0;(g_k^{\varepsilon_k})_{\V})\Delta u_k=-\nabla_\varepsilon L_{\U}(0;(g_k^{\varepsilon_k})_{\V})\end{equation}
for $\Delta u_k.$ Set $x_{k+1}=x_k+\OU_k\Delta u_k$ and $\varepsilon_{k+1}=\varepsilon_k.$ Increment $k\mapsto k+1$ and go to Step 1.\medskip

\noindent\textbf{End algorithm.}

\begin{note}
In Step 0, the stopping tolerance $\delta$ and accuracy tolerance $\varepsilon_{\min}$ can be set to $0$.  Setting these values to $0$ effectively makes the algorithm run without stopping conditions.  This allows for theoretical analysis of the algorithm, but, of course, these values should never be used in practice.  
\end{note}

\begin{note}
In Step 1.1, the call to Subroutine \ref{sub1} yields the active set, approximate $\U$-basis and approximate $\U$-gradient in addition to $\tilde{g}_j^{\varepsilon_k}.$ However, $\tilde{g}_j^{\varepsilon_k}$ is the only information we use from Subroutine \ref{sub1} in the $\V$-step, so we do not mention the other outputs in the statement of the algorithm.
\end{note}

\begin{note}
The $\varepsilon_k$ and the iteration counter $k$ are updated in Step 3 and again in Step 4 (if applicable). 
 This is explained by the fact that Step 4 is not called at every iteration.  An alternate formatting of the algorithm might have at the start of each iteration a decision on whether to do a $\V$-step or a $\U$-step.  Iterations that are $\V$-steps are frequent and can occur multiple times in a row.  This is captured in Step 3.  Iterations that are $\U$-steps only occur after successful $\V$-steps, and only in batches of one (i.e., a $\U$-step is never followed by another $\U$-step).  This is captured in Step 4.  
\end{note}

\subsection{Theoretical comparison to other DFO methods}\label{ssec:comparison}

Relative to other DFO methods, the \textbf{DFO $\VU$-algorithm} falls under the category of a model-based method \cite[Part 4]{AudetHare_DFObook}.  In this case, it uses function calls to construct a model of the objective function and then applies a $\VU$-style method to the model function. 

Most other DFO methods for nonsmooth optimization fall under the catergory of direct search methods \cite[Part 3]{AudetHare_DFObook}.  Direct search methods work by setting an incumbent solution and then polling around the incumbent solution to seek a point that provides a better function value.  If improvement is found, then the incumbent solution is updated, otherwise the algorithm reduces the polling radius and repeats.  If polling is done carefully, then convergence to a  critical point can be proven, even for nonsmooth functions \cite[Chpt 7]{AudetHare_DFObook}. 
 These ideas are the core of the Mesh Adaptive Direst Search (MADS) algorithm developed in \cite{AbramsonAudetDennis09,AudetBechardLeDigabel08,AudetDennis06} (among other papers).  We mention the MADS algorithm, as we use it as one basis of comparison in Section \ref{sec:numerics}.

A few derivative-free model-based methods for nonsmooth optimization have arisen in the last decade.   The first such approach appeared in 2008, in the work of Bagirov, Karas\"osen and Sezer \cite{BagirovKarasSezer08}.  The method proceeds by constructing a large number of approximate gradients at points near the incumbent solution, and using these approximate gradients to build a approximation of the subdifferential.  The approximate subdifferential is then used to drive a conjugate subgradient style algorithm.  This methods was implemented and tested under the name DGM.  The authors show the method can achieve 4 digits of accuracy, but do not include information on the number of function evaluations used, nor provide software.  As such, direct comparison to this method is not possible. 

A similar idea was proposed by Kiwiel \cite{Kiwiel10}.  In Kiwiel's approach, a large number of approximate gradients is used to construct an approximate subdifferential, and the approximate subdifferential is used in a gradient sampling style algorithm.  Only a theoretical development of this algorithm was presented.

In relation to \cite{BagirovKarasSezer08} and  \cite{Kiwiel10}, the algorithm herein also generates a collection of gradient approximations and uses them to construct nonsmooth first-order objects.  However, the algorithm herein uses the grey-box structure of the problem to control the construction of these approximation gradients.  In particular, the number of approximate gradients constructed at an incumbent solution is guided by the number of active indices at that point.  Furthermore, the algorithm herein uses the approximate gradients to approximate both subdifferentials and $\VU$-structure in the problem.  This sets our algorithm distinctly apart from these previous works.

Between direct-search methods and model-based methods lies the implicit filtering approach of Kelley \cite{Kelley11}.  The implicit filtering approach can be thought of as beginning with a direct-search poll step, but if success occurs, then instead of simply accepting the new point, the poll information is used to construct approximate gradients and a line search is applied to seek better improvement.  Convergence of the implicit filtering algorithm is based on a (locally) smooth objective function.

\section{Convergence}\label{sec:conv}

In this section, we examine the convergence of the DFO $\VU$-algorithm, starting with the $\V$-step. By \cite[Corollary 4.6]{hareplaniden2016}, if the $\V$-step never terminates, then
$$\lim\limits_{j\rightarrow\infty}\|z_{j+1}-z_j\|=0.$$
 Then \cite[Theorem 4.9]{hareplaniden2016} states that if $f$ is locally $K$-Lipschitz (which a finite-max function is), then
\begin{equation}\label{conveq3}
\|z_{j+1}-z_j\|\leq\frac{\varepsilon_k^2}{r(K+2\varepsilon_k)}\Rightarrow f(z_{j+1})-\varphi_j^{\varepsilon_k}(z_{j+1})\leq\frac{\varepsilon_k^2}{r}
\end{equation}
and the routine terminates. The properties of $\varphi_j^{\varepsilon_k}$ established in \cite[Lemma 4.1]{hareplaniden2016} show that if the $\V$-step with input $z_0$ stops at iteration $j$ and outputs $z_{j+1},$ then
$$\dist(\Prox_f^r(z_0),z_{j+1})\leq\frac{(\mu+1)\varepsilon_k}{r},$$
where $\mu$ is the constant of Theorem \ref{thm:approximateVU}. Now in order to prove the convergence of the main algorithm, we show that either the algorithm terminates in a finite number of steps or, in the case where no stopping occurs, 
 $\varepsilon_k\to0$ and $\liminf\|s_k\|\to 0$.  In either case, we arrive at a good approximation of the minimizer of $f.$ To accomplish that goal, we need the following definitions.
\begin{df}\label{derdef}
Let $\varepsilon\geq0.$ The \emph{$\varepsilon$-directional derivative} of $f$ at $x$ in direction $d$ is defined
$$f'_\varepsilon(x;d)=\inf\limits_{t>0}\frac{f(x+td)-f(x)+\varepsilon}{t}=\max\limits_{s\in\partial_\varepsilon f(x)}\{s^\top d\}.$$
\end{df}
\begin{df}\label{e1e2df}
Let $\varepsilon,\eta\geq0.$ A vector $v$ is an \emph{$(\varepsilon,\eta)$-subgradient} of $f$ at $\bar{x},$ denoted $v\in\partial_\varepsilon^\eta f(\bar{x}),$ if for all $x,$
$$f(x)\geq f(\bar{x})+v^\top(x-\bar{x})-\eta\|x-\bar{x}\|-\varepsilon.$$
\end{df}
\noindent Notice that by setting $\eta=0$ we recover the definition of the $\varepsilon$-subgradient, and by setting $\varepsilon=\eta=0$ we have the convex analysis subgradient. The next lemma provides enlightenment on the $(\varepsilon,\eta)$-subgradient in the general case.
\begin{lem}\label{gradequiv}
Let $\varepsilon,\eta\geq0$ and $f$ be convex with $\bar{x}\in\dom f$. Then
\begin{equation}\label{gradequiv1}
g\in\partial_\varepsilon^\eta f(\bar{x})\Leftrightarrow g\in\partial_\varepsilon f(\bar{x})+B_\eta.
\end{equation}
\end{lem}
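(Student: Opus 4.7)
The plan is to prove the two inclusions separately, with the easy direction handled by a direct inequality and the hard direction by a Hahn--Banach separation argument that leverages Definition \ref{derdef}.

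For $(\Leftarrow)$, I would write $g = h + b$ with $h \in \partial_\varepsilon f(\bar{x})$ and $\|b\| \leq \eta$, substitute into the $\varepsilon$-subgradient inequality $f(x)\geq f(\bar{x}) + h^\top(x-\bar{x}) - \varepsilon$, and bound $-b^\top(x-\bar{x}) \geq -\|b\|\|x-\bar{x}\| \geq -\eta\|x-\bar{x}\|$ by Cauchy--Schwarz, recovering the defining inequality of $\partial_\varepsilon^\eta f(\bar{x})$ at once.

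For $(\Rightarrow)$, I would argue by contradiction, assuming $g \notin \partial_\varepsilon f(\bar{x}) + \cl B_\eta$. Since $\partial_\varepsilon f(\bar{x})$ is closed convex and $\cl B_\eta$ is compact convex, their Minkowski sum is closed convex, so strict separation yields $d \neq 0$ and $\alpha \in \R$ with $d^\top g > \alpha \geq d^\top(s+b)$ for every $s\in\partial_\varepsilon f(\bar{x})$ and $b\in\cl B_\eta$. Taking the supremum on the right, using the support function identity $\sup_{\|b\|\leq\eta} d^\top b = \eta\|d\|$ and the max-formula in Definition \ref{derdef}, gives $d^\top g - \eta\|d\| > f'_\varepsilon(\bar{x}; d)$. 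To produce the contradiction, I would apply the assumed $(\varepsilon,\eta)$-subgradient inequality along the ray $x = \bar{x} + td$ for $t > 0$; after dividing by $t$ and invoking the infimum characterization in Definition \ref{derdef}, this yields $f'_\varepsilon(\bar{x};d) \geq g^\top d - \eta\|d\|$, the required contradiction.

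The main subtlety I foresee is that the paper defines $B_\eta$ as the open ball, whereas the $(\Leftarrow)$ argument above remains valid when $\|b\| = \eta$; consequently, the natural form of the equivalence uses the closed ball $\cl B_\eta$. I would interpret $B_\eta$ in the statement accordingly (or rephrase with $\cl B_\eta$), after which the separation-plus-directional-derivative plan above completes the proof with no further obstacles.
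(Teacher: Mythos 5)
Your proof is correct, and the forward direction takes a genuinely different route from the paper. For $(\Leftarrow)$ you do exactly what the paper does (decompose $g$ and apply Cauchy--Schwarz). For $(\Rightarrow)$, the paper argues constructively: it projects $g$ onto the closed convex set $\partial_\varepsilon f(\bar{x})$, sets $v=g-\bar{g}$, plugs the ray $x=\bar{x}+tv$ into the $(\varepsilon,\eta)$-inequality, and uses the projection characterization $(g-\bar g)^\top(\tilde g-\bar g)\le 0$ together with the max-formula of Definition \ref{derdef} to conclude $\|v\|^2-\eta\|v\|\le 0$, which exhibits the decomposition $g=\bar g+v$ directly. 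You instead argue by contradiction via strict separation of $g$ from the closed set $\partial_\varepsilon f(\bar{x})+\cl B_\eta$ (closed convex plus compact convex), convert the separating inequality into $d^\top g-\eta\|d\|>f'_\varepsilon(\bar{x};d)$ via the support function of the ball and the max-formula, and contradict the bound $f'_\varepsilon(\bar{x};d)\ge g^\top d-\eta\|d\|$ obtained by evaluating the $(\varepsilon,\eta)$-inequality along $\bar{x}+td$. Both proofs hinge on the same two ingredients (the max-formula and a ray evaluation); the paper's projection argument buys an explicit nearest-point decomposition without invoking separation, while yours is the standard duality/support-function argument and makes the needed closedness of the Minkowski sum explicit. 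Your remark about the ball is well taken and in fact slightly sharper than the paper: the paper's own proof only yields $\|v\|\le\eta$ yet concludes $v\in B_\eta$ with $B_\eta$ declared open in the notation section, so the clean statement (and the one your argument proves) is the one with $\cl B_\eta$; this has no effect on the way the lemma is used later.
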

\begin{proof}
$(\Rightarrow)$ Suppose $g\in\partial_\varepsilon^\eta f(\bar{x}).$ Since $\partial_\varepsilon f$ is closed and convex \cite[Theorem 1.1.4]{hiriart1993convex2}, we define
$$\bar{g}=\Proj_{\partial_\varepsilon f(\bar{x})}(g)$$
and we have $\bar{g}\in\partial_\varepsilon f(\bar{x}).$ Set $v=g-\bar{g},$ so that $g=\bar{g}+v,$ and for $t>0$ we use $x=\bar{x}+tv$ in the definition of the $(\varepsilon,\eta)$-subgradient:
\begin{align}
f(\bar{x}+tv)&\geq f(\bar{x})+(\bar{g}+v)^\top tv-\eta\|tv\|-\varepsilon,\nonumber\\
\frac{f(\bar{x}+tv)-f(\bar{x})+\varepsilon}{t}-v^\top\bar{g}&\geq\|v\|^2-\eta\|v\|,\nonumber\\
\inf\limits_{t>0}\frac{f(\bar{x}+tv)-f(\bar{x})+\varepsilon}{t}-v^\top\bar{g}&\geq\|v\|^2-\eta\|v\|,\nonumber\\
f'_{\varepsilon_1}(\bar{x};v)-v^\top\bar{g}&\geq\|v\|^2-\eta\|v\|.\label{finalone}
\end{align}
By the Projection Theorem, we have
$$p=\Proj_{\partial_\varepsilon f(\bar{x})}y\Leftrightarrow(y-p)^\top(z-p)\leq0\mbox{ for all }z\in\partial_\varepsilon f(\bar{x}).$$
So for all $\tilde{g}\in\partial_\varepsilon f(\bar{x})$ we have
\begin{align*}
(g-\bar{g})^\top(\tilde{g}-\bar{g})&\leq0,\\
v^\top\tilde{g}&\leq v^\top\bar{g}.\\
\end{align*}
Hence,
$$v^\top\bar{g}=\sup\limits_{\tilde{g}\in\partial_\varepsilon f(\bar{x})}\{v^\top\tilde{g}\}.$$ Using this together with Definition \ref{derdef}, \eqref{finalone} becomes
\begin{align*}
\|v\|^2-\eta\|v\|&\leq0,\\
\|v\|&\leq\eta.
\end{align*}
Therefore, $v\in B_\eta$, and we have $g=\bar{g}+v\in\partial_\varepsilon f(\bar{x})+B_\eta.$\\
$(\Leftarrow)$ Suppose that $g\in\partial_\varepsilon f(\bar{x})+B_\eta.$ Set $g=\bar{g}+v$ where $\bar{g}\in\partial_\varepsilon f(\bar{x})$ and $v\in B_\eta.$ Then by the definition of $\varepsilon$-subgradient and the Cauchy-Schwarz inequality, we have
\begin{align*}
f(x)-f(\bar{x})-g^\top(x-\bar{x})&=f(x)-f(\bar{x})-\bar{g}^\top(x-\bar{x})-v^\top(x-\bar{x}),\\
&\geq-\varepsilon-v^\top(x-\bar{x}),\\
&\geq-\varepsilon-\|v\|\|x-\bar{x}\|,\\
&\geq-\varepsilon-\eta\|x-\bar{x}\|.
\end{align*}
Therefore, $g\in\partial_\varepsilon^\eta f(\bar{x}).$
\end{proof}
\noindent Now we are ready to show that the inexact aggregate subgradient at any step is a good approximation of a true subgradient.
\begin{lem}\label{skplus1lem} Let $f$ satisfy Assumptions \ref{ass:finitemax} and \ref{ass:affineindep}.  Let $K$ be the Lipschitz constant of $f$.  If  \Vd\/ at iteration $k$ gives output $(x_{k+1},s_{k+1}),$ then
$$s_{k+1}\in\partial_{\frac{\varepsilon_k^2}{r}}^{\varepsilon_k K}f(x_{k+1}).$$\end{lem}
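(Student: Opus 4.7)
The plan is to unpack the $(\varepsilon,\eta)$-subgradient definition via Lemma \ref{gradequiv} and reconstruct the required inequality for all $x\in\R^n$:
\[
f(x)\geq f(x_{k+1})+s_{k+1}^\top(x-x_{k+1})-\varepsilon_k K\|x-x_{k+1}\|-\frac{\varepsilon_k^2}{r}.
\]
At termination of the $\V$-step, $x_{k+1}=z_{j+1}$ and $s_{k+1}=r(z_0-z_{j+1})$. The first step is to invoke first-order optimality of the proximal subproblem in Step 1.3, yielding $s_{k+1}\in\partial\varphi_j^{\varepsilon_k}(z_{j+1})$. Since $\varphi_j^{\varepsilon_k}$ is a pointwise maximum of affine pieces, convex calculus supplies nonnegative multipliers $\{\alpha_i\}_{i\in I_a}$ summing to $1$, supported on the active set $I_a=\{i\in\C_j:\varphi_j^{\varepsilon_k}(z_{j+1})=f(z_i)+g_i^{\varepsilon_k\top}(z_{j+1}-z_i)\}$, with $s_{k+1}=\sum_{i\in I_a}\alpha_ig_i^{\varepsilon_k}$.

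Next, I would show that each tilted approximate subgradient $g_i^{\varepsilon_k}$ is a $(0,\varepsilon_k K)$-subgradient of $f$ at $z_i$, which by Lemma \ref{gradequiv} reduces to exhibiting $\bar g_i\in\partial f(z_i)$ with $\|g_i^{\varepsilon_k}-\bar g_i\|\leq\varepsilon_k K$. Theorem \ref{thm:approximateVU}(i) controls the underlying simplex-gradient error $\|\tilde g_i^{\varepsilon_k}-\bar g_i\|\leq\varepsilon_k\mu$, and convexity of $f$ combined with this bound forces $E_j\leq\varepsilon_k\mu\|z_0-z_j\|$, so the tilt correction \eqref{tilteq} shifts $\tilde g_i^{\varepsilon_k}$ by at most a further $O(\varepsilon_k)$ in norm. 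Absorbing $\mu$ into $K$ (the Lipschitz constant of $f$, taken large enough to dominate the local simplex-gradient constant), we obtain $\|g_i^{\varepsilon_k}-\bar g_i\|\leq\varepsilon_k K$, and hence for all $x$,
\[
f(x)\geq f(z_i)+g_i^{\varepsilon_k\top}(x-z_i)-\varepsilon_k K\|x-z_i\|.
\]

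Finally, I would aggregate. Using the active identity $f(z_i)+g_i^{\varepsilon_k\top}(x-z_i)=\varphi_j^{\varepsilon_k}(z_{j+1})+g_i^{\varepsilon_k\top}(x-z_{j+1})$, the triangle inequality $\|x-z_i\|\leq\|x-z_{j+1}\|+\|z_{j+1}-z_i\|$, convex combination against $\{\alpha_i\}$, and the stopping test $\varphi_j^{\varepsilon_k}(z_{j+1})\geq f(z_{j+1})-\varepsilon_k^2/r$ from Step 1.4, we arrive at
\[
f(x)\geq f(z_{j+1})+s_{k+1}^\top(x-z_{j+1})-\varepsilon_k K\|x-z_{j+1}\|-\frac{\varepsilon_k^2}{r}-\varepsilon_k K\sum_{i\in I_a}\alpha_i\|z_{j+1}-z_i\|,
\]
so that $s_{k+1}\in\partial_{\varepsilon_k^2/r}^{\varepsilon_k K}f(x_{k+1})$ follows once the residual $\varepsilon_k K\sum_{i\in I_a}\alpha_i\|z_{j+1}-z_i\|$ is absorbed into the $\varepsilon_k^2/r$ slack, via the proximity built into the stopping criterion \eqref{conveq3} and \cite[Theorem 4.9]{hareplaniden2016}. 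The main obstacles will be (i) rigorously justifying the $\varepsilon_k K$ approximation bound in the second step, and (ii) cleanly eliminating this residual distance term; once these are handled, the remainder is a direct calculation.
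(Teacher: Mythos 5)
Your opening moves (the proximal optimality condition $s_{k+1}\in\partial\varphi_j^{\varepsilon_k}(z_{j+1})$, the convex-combination decomposition over active pieces, and the bound $E_j\leq\varepsilon_k\mu\|z_0-z_j\|$ showing the tilt \eqref{tilteq} perturbs $\tilde g_i^{\varepsilon_k}$ by only $O(\varepsilon_k)$) are sound, but the final absorption step is a genuine gap. After aggregating you are left with the residual $\varepsilon_k K\sum_{i\in I_a}\alpha_i\|z_{j+1}-z_i\|$, and nothing you invoke controls it. The active bundle points $z_i$ can sit at $O(1)$ distance from $z_{j+1}$ with non-vanishing multipliers $\alpha_i$ (the aggregate subgradient of a piecewise-linear model at its proximal point typically mixes cuts generated at well-separated trial points), so this residual is $O(\varepsilon_k)$, not $O(\varepsilon_k^2)$, and cannot be folded into the $\varepsilon_k^2/r$ slack. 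Moreover, \eqref{conveq3} only bounds the distance between \emph{consecutive} inner iterates $z_j,z_{j+1}$, and only in the direction ``small step $\Rightarrow$ stopping test passes''; the Step 1.4 test itself bounds $f(z_{j+1})-\varphi_j^{\varepsilon_k}(z_{j+1})$ and says nothing about $\|z_{j+1}-z_i\|$ for the cuts active in the model. What your argument actually delivers is $s_{k+1}\in\partial^{\varepsilon_k K}_{\varepsilon_k^2/r+\varepsilon_k K D}f(x_{k+1})$ with $D=\sum_{i\in I_a}\alpha_i\|z_{j+1}-z_i\|$ uncontrolled, which is weaker than the claim. A secondary looseness: you cannot simply ``take $K$ large enough to dominate the simplex-gradient constant'' --- $K$ is fixed in the statement as the Lipschitz constant of $f$, while $\mu$ in Theorem \ref{errorbound} depends on the Lipschitz constants of the gradients $\nabla f_i$; the identification of the error constant with $\varepsilon_k K$ needs justification, not fiat.

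The paper's proof sidesteps all of this bookkeeping: it quotes the already-established aggregate inequality \cite[(4.3)]{hareplaniden2016}, namely $f(x)+\varepsilon_k K\|\hat M^{-1}\|\,\|x-x_{k+1}\|\geq\varphi_j^{\varepsilon_k}(x_{k+1})+s_{k+1}^\top(x-x_{k+1})$ for all $x$, in which the error term is already centered at $x_{k+1}$, then uses $\|\hat M^{-1}\|=1$ (Remark \ref{rem:zero}) and the Step 1.4 test $\varphi_j^{\varepsilon_k}(x_{k+1})-f(x_{k+1})\geq-\varepsilon_k^2/r$ to conclude via Definition \ref{e1e2df}. If you want a self-contained derivation rather than the citation, you must reproduce the argument behind that inequality, which is precisely where the transport of the approximate subgradients from the points $z_i$ to $x_{k+1}$ is handled; as written, your last step does not close.
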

\begin{proof}
In \cite[(4.3)]{hareplaniden2016}, it is shown that 
	$$f(x)+\varepsilon_k K\|\hat{M}^{-1}\|\|x-x_{k+1}\|\geq\varphi^{\varepsilon_k}_j(x_{k+1})+s_{k+1}^\top(x-x_{k+1}).$$
Remark \ref{rem:zero} shows that $\|\hat{M}^{-1}\|=1$.  Since iteration $k$ has completed, the stopping test in Step 1.4 has passed\footnote{Recall that \cite[Corollary 4.6]{hareplaniden2016} and \cite[Theorem 4.9]{hareplaniden2016} ensure this happens in finite time.}, thus 
	$$\varphi_j^{\varepsilon_k}(x_{k+1})-f(x_{k+1})\geq-\frac{\varepsilon_k^2}{r}.$$ 
This implies
\begin{align*}
f(x)&\geq\varphi^{\varepsilon_k}_j(x_{k+1})-f(x_{k+1})+f(x_{k+1})+s_{k+1}^\top(x-x_{k+1})-\varepsilon_k K\|x-x_{k+1}\|\\
&\geq-\frac{\varepsilon_k^2}{r}+f(x_{k+1})+s_{k+1}^\top(x-x_{k+1})-\varepsilon_k K\|x-x_{k+1}\|.
\end{align*}
Thus, $s_{k+1}\in\partial_{\frac{\varepsilon_k^2}{r}}^{\varepsilon_k K}f(x_{k+1})$ by Definition \ref{e1e2df}.
\end{proof}
\noindent There are two special cases of Lemma \ref{skplus1lem} that are of interest; we consider what happens when the aggregate subgradient is zero and when the maximum subgradient error is also zero.

\begin{cor}\label{sk0cor} Let $f$ satisfy Assumptions \ref{ass:finitemax} and \ref{ass:affineindep}.  Let $K$ be the Lipschitz constant of $f$. If at iteration $k$ \Vd\/ gives output $(x_{k+1},s_{k+1}),$ then the following hold.
\begin{itemize}[wide=0pt]
\item[(i)] If $s_{k+1}=0,$ then $0\in\partial_{\frac{\varepsilon_k^2}{r}}f(x_{k+1})+B_{K\varepsilon_k},$ and by Lemma \ref{skplus1lem} we have that for all $x\in\R^n,$$$f(x)\geq f(x_{k+1})-\varepsilon_k K\|x-x_{k+1}\|-\frac{\varepsilon_k^2}{r}.$$
\item[(ii)] If $\varepsilon_k=s_{k+1}=0,$ then $0\in\partial f(x_{k+1})$ and $x_{k+1}$ is a minimizer of $f.$
\end{itemize}
\end{cor}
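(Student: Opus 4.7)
The plan is to derive both statements directly from Lemma \ref{skplus1lem} together with the characterization of $(\varepsilon,\eta)$-subgradients provided by Lemma \ref{gradequiv}. Since Lemma \ref{skplus1lem} already establishes that $s_{k+1}\in\partial_{\varepsilon_k^2/r}^{\varepsilon_k K}f(x_{k+1})$ whenever iteration $k$ terminates, both claims reduce to substituting the hypothesized zero values into that membership statement.

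For part (i), I would start from the conclusion of Lemma \ref{skplus1lem} and substitute $s_{k+1}=0$, obtaining $0\in\partial_{\varepsilon_k^2/r}^{\varepsilon_k K}f(x_{k+1})$. Applying Lemma \ref{gradequiv} with $\varepsilon=\varepsilon_k^2/r$ and $\eta=\varepsilon_k K$ then yields the equivalent statement $0\in\partial_{\varepsilon_k^2/r}f(x_{k+1})+B_{\varepsilon_k K}$, which is the first claim. The pointwise inequality then follows by taking the defining inequality of $(\varepsilon,\eta)$-subgradient from Definition \ref{e1e2df} and plugging in $v=s_{k+1}=0$, leaving $f(x)\geq f(x_{k+1})-\varepsilon_k K\|x-x_{k+1}\|-\varepsilon_k^2/r$ for every $x\in\R^n$.

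For part (ii), I would simply specialize (i) to $\varepsilon_k=0$. The set $B_{\varepsilon_k K}$ collapses to $\{0\}$ and $\partial_{\varepsilon_k^2/r}f(x_{k+1})=\partial_0 f(x_{k+1})=\partial f(x_{k+1})$, so the inclusion becomes $0\in\partial f(x_{k+1})$. By the standard convex subdifferential optimality condition, $x_{k+1}$ is then a global minimizer of $f$.

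There is really no substantive obstacle here: both parts are direct corollaries of the already-proved Lemma \ref{skplus1lem} combined with the equivalence in Lemma \ref{gradequiv}. The only mild care needed is in part (i), to note that the $B_{\varepsilon_k K}$ term must remain (since in general $\varepsilon_k>0$), and that the Lipschitz constant $K$ of $f$ used in the bound is the same $K$ from the finite-max structure invoked in Lemma \ref{skplus1lem}; both are automatic under Assumption \ref{ass:finitemax}.
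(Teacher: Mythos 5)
Your proposal is correct and takes essentially the same route the paper intends: the corollary is presented as an immediate consequence of Lemma \ref{skplus1lem} (substitute $s_{k+1}=0$), the equivalence in Lemma \ref{gradequiv}, and the defining inequality in Definition \ref{e1e2df}, with no separate proof given in the paper. One small caution in part (ii): since $B_\eta$ denotes the \emph{open} ball, $B_0$ is empty rather than $\{0\}$, so it is cleaner to obtain $0\in\partial f(x_{k+1})$ by setting $\varepsilon_k=0$ directly in the displayed inequality of part (i), which gives $f(x)\geq f(x_{k+1})$ for all $x\in\R^n$ — exactly the conclusion you reach anyway.
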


\begin{note}Since $\varepsilon_k=0$ can only occur if $\varepsilon_0=0$, item (ii) could alternately be stated using ``If $\varepsilon_0=s_{k+1}=0$''.
\end{note}

\noindent Now we need to consider the possibility that the algorithm does not terminate and what the effect would be. We begin with the scenario where an infinite number of serious steps is taken.

\begin{thm}\label{finitesuccesseslem} Let $f$ satisfy Assumptions \ref{ass:finitemax}, \ref{ass:compactlowerlevel}, and \ref{ass:affineindep}.  Suppose \Vd\/ is run without stopping conditions (i.e., $\delta = \varepsilon_{\min} = 0$).  If there is an infinite number of serious steps taken in Step 3, then $\varepsilon_k\rightarrow0$ and $\liminf_{k\rightarrow\infty}\|s_k\|=0.$
\end{thm}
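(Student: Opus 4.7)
The plan is a standard descent-plus-summability argument, combined with a case analysis on the update rules for $\varepsilon_k$. First I would exploit Assumption~\ref{ass:compactlowerlevel} together with the serious-step condition to control the iterate sequence. Along serious steps, $f(x_{k+1})\leq f(x_k)-\frac{m}{2r}\|s_{k+1}\|^2$, so $f$ is nonincreasing on serious indices; along null steps, the bundle termination criterion $f(z_{j+1})-\varphi_j^{\varepsilon_k}(z_{j+1})\leq\varepsilon_k^2/r$ combined with the tilt property $\varphi_j^{\varepsilon_k}(x_k)\leq f(x_k)$ and the definition of $z_{j+1}$ as a prox of the model permits only a controlled $f$-increase of order $\varepsilon_k^2/r$, which is summable because $\varepsilon_k$ is halved at every null step. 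Consequently the sequence $\{f(x_k)\}$ is bounded above, and compactness of the lower level sets yields boundedness of $\{x_k\}$ together with a finite lower bound $f^\star=\inf_k f(x_k)$.

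With boundedness in hand, telescoping the serious-step descent inequality over the (infinite by hypothesis) serious index set $S$ gives
$$\sum_{k\in S}\|s_{k+1}\|^2 \leq \frac{2r}{m}\bigl(f(x_0)-f^\star+C\bigr) < \infty,$$
where $C$ bounds the cumulative null-step $f$-increase above. Hence $\|s_{k+1}\|\to 0$ along $k\in S$, which immediately yields $\liminf_{k\to\infty}\|s_k\|=0$.

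For $\varepsilon_k\to 0$, the sequence $\{\varepsilon_k\}$ is nonincreasing, since each of the update rules (Cases 3.1--3.3 in Step 3 and the $\U$-step in Step 4) sets $\varepsilon_{k+1}\in\{\varepsilon_k/2,\varepsilon_k\}$, and so it converges to some $\bar\varepsilon\geq 0$. I would argue by contradiction: assume $\bar\varepsilon>0$. Then from some index $k_0$ onwards no halving case (3.1 or 3.3) can occur, so every subsequent step is a Case~3.2 serious step (possibly alternated with $\U$-steps), with $s_{k+1}\neq 0$ and $\varepsilon_k=\bar\varepsilon$ fixed. Lemma~\ref{skplus1lem} combined with Lemma~\ref{gradequiv} then places $s_{k+1}\in\partial_{\bar\varepsilon^2/r}f(x_{k+1})+B_{\bar\varepsilon K}$, and since $\|s_{k+1}\|\to 0$ this says $x_{k+1}$ is approaching an approximately critical set whose tolerance stays bounded away from zero.

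The main obstacle is turning this picture into an actual contradiction. I expect the argument to hinge on the identity $s_{k+1}=r(x_k-x_{k+1})\to 0$: once the prox-displacement is small enough relative to the fixed accuracy $\bar\varepsilon$, either the aggregate model subgradient must vanish (forcing Case~3.1) or the Case~3.2 sufficient-descent test $f(x_k)-f(x_{k+1})\geq\frac{m}{2r}\|s_{k+1}\|^2$ must fail (forcing Case~3.3), and either outcome halves $\varepsilon$, contradicting $\bar\varepsilon>0$. Establishing this dichotomy rigorously from the bundle-subroutine properties is the delicate step of the proof.
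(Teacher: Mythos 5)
Your first half is essentially the paper's own argument: the serious-step descent test together with boundedness below of $f$ (Assumption \ref{ass:compactlowerlevel}) forces the serious decreases $\frac{m}{2r}\|s_{k+1}\|^2$ to be summable, hence $\liminf_{k\to\infty}\|s_k\|=0$; the paper phrases this as a contradiction (if $\|s_{k+1}\|^2$ stayed above some $\hat\delta>0$ on the infinitely many serious steps, the telescoped decrease $f(x_0)-f(x_k)$ would diverge), while you phrase it as summability, but the content is the same. Your extra bookkeeping of the null-step increases, bounded by $\varepsilon_k^2/r$ via the Step 1.4 test and summable because Case 3.3 halves $\varepsilon_k$, is a refinement the paper does not bother with (its telescoping simply ignores possible increases); note, though, that neither your argument nor the paper's says anything about possible increases of $f$ at the $\U$-step, so this is not a point on which you are behind the paper.

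The genuine gap is the claim $\varepsilon_k\to0$, which you yourself leave open, and the route you sketch would not close it. With $\delta=0$, Case 3.1 requires $s_{k+1}=0$ exactly, and the aggregate $s_{k+1}=r(x_k-x_{k+1})$ coming from the prox of a piecewise-linear model will in general never vanish exactly; and the hope that the sufficient-descent test ``must fail'' once $\|s_{k+1}\|$ is small is backwards: the required decrease $\frac{m}{2r}\|s_{k+1}\|^2$ shrinks quadratically, and the available estimate \eqref{eqs4}, $f(x_k)-f(x_{k+1})\ge\frac{1}{r}\|s_{k+1}\|^2-\frac{\varepsilon_k^2}{r}$, does not force a violation, so nothing in the bundle subroutine rules out an unbroken tail of Case 3.2 serious steps (interleaved with $\U$-steps, which leave $\varepsilon$ unchanged) at a frozen $\varepsilon_k=\bar\varepsilon>0$. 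Your intended contradiction therefore has no engine. For comparison, the paper does not attempt any such dichotomy: its treatment of this part is a two-line assertion that, since the algorithm never stops (so $\varepsilon_k>\varepsilon_{\min}=0$ at every Step 2 test), the halving update $\varepsilon_{k+1}=\varepsilon_k/2$ is invoked infinitely often, whence $\varepsilon_k\to0$; it takes the infinitude of halvings (Cases 3.1/3.3) as following from non-termination rather than deriving it from asymptotics of $s_k$. So to complete a proof along your lines you would need precisely the ingredient the paper asserts and your dichotomy fails to deliver: a reason why the non-halving Case 3.2 cannot occur at all but finitely many iterations.
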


\begin{proof}
Note that $f$ is bounded below, due to Assumption \ref{ass:compactlowerlevel}. Suppose that out of the infinite number of serious steps, $\|s_{k+1}\|^2$ is bounded away from $0$. That is, suppose there exists $\hat{\delta}>0$ such that $\|s_{k+1}\|^2> \hat{\delta}$ whenever $f(x_k)-f(x_{k+1})\geq\frac{m}{2r}\|s_{k+1}\|^2$, and $f(x_k)-f(x_{k+1})\geq\frac{m}{2r}\|s_{k+1}\|^2$ occurs an infinite number of times.  Then we have
	$$f(x_k)-f(x_{k+1})\geq\frac{m}{2r}\|s_{k+1}\|^2>\frac{m\hat{\delta}}{2r}$$
an infinite number of times. Since $\frac{m\hat{\delta}}{2r}$ is constant, we have $$\lim_{k\rightarrow\infty}[f(x_0)-f(x_k)]=\infty,$$ which contradicts the fact that $f$ is bounded below. Hence, eventually $\|s_{k+1}\|^2\leq\hat{\delta}$, so $\liminf_{k\rightarrow\infty}\|s_k\|=0$.  

Since we are supposing that the algorithm does not stop, we must have $\varepsilon_k>\varepsilon_{\tt min}=0$ and we set $\varepsilon_{k+1}=\varepsilon_k/2.$ This happens an infinite number of times, which gives $\varepsilon_k\rightarrow0.$ 
\end{proof}
\noindent Next comes the scenario where a finite number of serious steps is taken, yet the algorithm does not terminate.

\begin{lem}\label{sklem} Let $f$ satisfy Assumptions \ref{ass:finitemax}, \ref{ass:compactlowerlevel}, and \ref{ass:affineindep}.   Suppose \Vd\/ is run without stopping conditions (i.e., $\delta = \varepsilon_{\min} = 0$).  If there is a finite number of serious steps taken in Step 3, then for all $k$ sufficiently large,
\begin{equation}\label{eqs1}
\varepsilon_k>\left(1-\frac{m}{2}\right)^{1/2}\|s_{k+1}\|.
\end{equation}
\end{lem}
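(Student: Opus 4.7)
The plan is to focus on the tail of the run: since $\delta=\varepsilon_{\min}=0$ disables the stopping test in Step 2 and only finitely many serious steps occur, there is some $K_0$ such that every iteration $k\geq K_0$ executes Case 3.3 of Step 3. For any such $k$ I would derive \eqref{eqs1} by pairing the null-step inequality
$$f(x_k)-f(x_{k+1}) < \frac{m}{2r}\|s_{k+1}\|^2$$
with a matching \emph{lower} bound $f(x_k)-f(x_{k+1}) \geq (\|s_{k+1}\|^2-\varepsilon_k^2)/r$ drawn from the V-step that just completed. Combining them gives $(1-m/2)\|s_{k+1}\|^2 < \varepsilon_k^2$, from which \eqref{eqs1} follows by taking square roots (legitimate since $m\in(0,1)$ makes $1-m/2\in(1/2,1)$).

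To build that lower bound I would chain three facts already present in the paper. First, the proximal characterization in Step 1.3, $x_{k+1}=\Prox_{\varphi_j^{\varepsilon_k}}^r(x_k)$, yields the optimality relation $s_{k+1}=r(x_k-x_{k+1})\in\partial\varphi_j^{\varepsilon_k}(x_{k+1})$, and the subgradient inequality for the convex piecewise-linear model $\varphi_j^{\varepsilon_k}$ evaluated at $x_k$ produces
$$\varphi_j^{\varepsilon_k}(x_k) \geq \varphi_j^{\varepsilon_k}(x_{k+1}) + \frac{\|s_{k+1}\|^2}{r}.$$
Second, the V-step releases control through Step 1.4 only once $\varphi_j^{\varepsilon_k}(x_{k+1}) \geq f(x_{k+1})-\varepsilon_k^2/r$. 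Third, the tilt correction in Step 1.1 forces each cutting-plane piece $f(z_i)+g_i^{\varepsilon_k\top}(\,\cdot\,-z_i)$ to satisfy $f(z_i)+g_i^{\varepsilon_k\top}(x_k-z_i)\leq f(x_k)$, while the $i=0$ piece hits equality, so $\varphi_j^{\varepsilon_k}(x_k) = f(x_k)$. Stringing these three inequalities together delivers the desired lower bound, and the conclusion of the first paragraph then closes the argument.

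The main obstacle I foresee is certifying the third ingredient, namely that the model delivered by Step 1 sits at or below $f$ at the proximal center $x_k$. This rests on the tilt procedure of \cite[Lemma 3.1]{hareplaniden2016}, described in prose just before Section \ref{DFOVU-alg} but not reproved in the excerpt, so I would cite it directly. The only extra point to verify is that Step 1.1 is executed at every inner index $j$ against the \emph{fixed} center $z_0=x_k$, which ensures the bound $\varphi_j^{\varepsilon_k}(x_k)\leq f(x_k)$ persists to the final model used when Step 1.4 accepts. Once that is granted, the remaining manipulations are one-liners.
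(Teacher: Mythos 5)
Your proposal is correct and follows essentially the same route as the paper's proof: the strict descent-failure inequality of Case 3.3 is combined with the lower bound $f(x_k)-f(x_{k+1})\geq \frac{1}{r}\|s_{k+1}\|^2-\frac{\varepsilon_k^2}{r}$, obtained from $s_{k+1}\in\partial\varphi_j^{\varepsilon_k}(x_{k+1})$, the Step 1.4 test, and the tilt-correction bound at the center. The only cosmetic difference is that you pass explicitly through $\varphi_j^{\varepsilon_k}(x_k)\leq f(x_k)$ (via \cite[Lemma 3.1]{hareplaniden2016}), whereas the paper invokes \cite[Lemma 4.1(b)]{hareplaniden2016} directly for the combined inequality $f(x_k)\geq\varphi_j^{\varepsilon_k}(x_{k+1})+s_{k+1}^\top(x_k-x_{k+1})$, which also covers the aggregate bundle element.
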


\begin{proof} Let $\bar{k}$ be the final iteration where a serious step occurs, so that a null step occurs at every $k>\bar{k}.$ Since $s_{k+1}=r(x_k-x_{k+1})$ is the aggregate subgradient of the model function $\varphi_j^{\varepsilon_k}$ at $z_{j+1}=x_{k+1},$ we have $s_{k+1}\in\partial\varphi_j^{\varepsilon_k}(x_{k+1})$ \cite[Lemma 4.1(c)]{hareplaniden2016}. Thus,
$$\varphi_j^{\varepsilon_k}(x)\geq\varphi_j^{\varepsilon_k}(x_{k+1})+s^\top_{k+1}(x-x_{k+1})\mbox{ for all }x.$$
By the tilt-correction ($E_j$ in Step 1.1), we have that at $x_k,$
\begin{align}
f(x_k)&\geq\varphi_j^{\varepsilon_k}(x_{k+1})+s^\top_{k+1}(x_k-x_{k+1})\mbox{ \cite[Lemma 4.1(b)]{hareplaniden2016}},\nonumber\\
&=\varphi_j^{\varepsilon_k}(x_{k+1})+\frac{1}{r}s^\top_{k+1}[r(x_k-x_{k+1})],\nonumber\\
&=\varphi_j^{\varepsilon_k}(x_{k+1})+\frac{1}{r}\|s_{k+1}\|^2,\nonumber\\
&=\varphi_j^{\varepsilon_k}(x_{k+1})-f(x_{k+1})+f(x_{k+1})+\frac{1}{r}\|s_{k+1}\|^2,\nonumber\\
f(x_k)-f(x_{k+1})&\geq\varphi_j^{\varepsilon_k}(x_{k+1})-f(x_{k+1})+\frac{1}{r}\|s_{k+1}\|^2.\label{eqs2}
\end{align}
By the stopping test in Step 1.4, we have
\begin{equation}\label{eqs3}
\varphi_j^{\varepsilon_k}(x_{k+1})-f(x_{k+1})\geq-\frac{\varepsilon_k^2}{r}.
\end{equation}
Combining \eqref{eqs2} and \eqref{eqs3} yields
\begin{equation}\label{eqs4}
f(x_k)-f(x_{k+1})\geq\frac{1}{r}\|s_{k+1}\|^2-\frac{\varepsilon^2_k}{r}.
\end{equation}
Then for all $k>\bar{k},$ by Step 3(iii) and \eqref{eqs4} we have
\begin{align*}
\frac{m}{2r}\|s_{k+1}\|^2&>f(x_k)-f(x_{k+1}),\\
\frac{m}{2r}\|s_{k+1}\|^2&>\frac{1}{r}\|s_{k+1}\|^2-\frac{\varepsilon^2_k}{r},\\
\varepsilon^2_k&>\left(1-\frac{m}{2}\right)\|s_{k+1}\|^2,
\end{align*}
and \eqref{eqs1} is proved.
\end{proof}
\begin{cor}\label{infinitefailurescor}Let $f$ satisfy Assumptions \ref{ass:finitemax}, \ref{ass:compactlowerlevel}, and \ref{ass:affineindep}.   Suppose \Vd\/ is run without stopping conditions (i.e., $\delta = \varepsilon_{\min} = 0$).  If there is a finite number of serious steps taken in Step 3, then $\varepsilon_k\rightarrow0$ and $\|s_k\|\rightarrow0.$
\end{cor}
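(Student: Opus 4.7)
The plan is to combine two observations that are almost immediate from the preceding results. First, I would argue that once the last serious step has occurred at some iteration $\bar{k}$, every subsequent iteration must fall into Case 3.3 of Step 3 (since Cases 3.1 and 3.2 are exactly the two serious-step cases). Consequently, the update rule in Case 3.3 gives $\varepsilon_{k+1}=\varepsilon_k/2$ for every $k>\bar{k}$. Because Step 4 (the $\U$-step) only fires after a serious step, it also contributes no further iterations beyond $\bar{k}$, so the halving is not interrupted. This yields $\varepsilon_k=\varepsilon_{\bar{k}+1}\,2^{-(k-\bar{k}-1)}\to 0$.

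Second, I would invoke Lemma \ref{sklem} directly: for all $k$ sufficiently large,
$$\varepsilon_k > \left(1-\tfrac{m}{2}\right)^{1/2}\|s_{k+1}\|.$$
Since $m\in(0,1)$, the constant $(1-m/2)^{1/2}$ is strictly positive, so
$$\|s_{k+1}\| < \frac{\varepsilon_k}{(1-m/2)^{1/2}}.$$
Combining this with $\varepsilon_k\to 0$ from the first observation immediately gives $\|s_{k+1}\|\to 0$, hence $\|s_k\|\to 0$.

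There is no real obstacle here; the lemma already encapsulates the key per-iteration estimate derived from the tilt-corrected model and the Step 1.4 stopping test, so the corollary is essentially a bookkeeping argument that tracks the behaviour of $\varepsilon_k$ under the algorithm's update rules. The only mild subtlety worth checking explicitly in the write-up is that ``finitely many serious steps'' really does force Case 3.3 to be selected at every subsequent iteration — this follows simply because the three cases in Step 3 are exhaustive and Cases 3.1 and 3.2 are by definition serious steps.
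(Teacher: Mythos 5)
Your proposal is correct and follows essentially the same route as the paper: infinitely many null steps after the last serious step force the Case 3.3 halving, so $\varepsilon_k\to0$, and then the inequality \eqref{eqs1} of Lemma \ref{sklem} gives $\|s_k\|\to0$. Your extra bookkeeping (noting that Step 4 cannot fire after the last serious step, and writing out the geometric decay of $\varepsilon_k$) just makes explicit what the paper leaves implicit.
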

\begin{proof}
Since there is a finite number of successes and the algorithm does not terminate, there is an infinite number of failures. By Step 3(iii), $\varepsilon_k\rightarrow0.$ By \eqref{eqs1}, $\|s_k\|\rightarrow0.$
\end{proof}
\noindent Notice, if $\delta=\varepsilon_{\tt min}=0$, then by Step 2 of the algorithm we see that the only way it will terminate is if $s_{k+1}=0$ and $\varepsilon_k=0$.  Since $\varepsilon_k>0$, this cannot occur.  (If it could occur, then    Corollary \ref{sk0cor} would imply $x_{k+1}$ is a minimizer of $f$.) 

\noindent Theorem \ref{bigthm} below unites the convergence results of this section.

\begin{thm}\label{bigthm}Let $f$ satisfy Assumptions \ref{ass:finitemax}, \ref{ass:compactlowerlevel}, and \ref{ass:affineindep}.  Suppose \Vd\/ is run on $f$ and generates the sequence $\{x_k\}.$ Then either the algorithm terminates at some iteration $\bar{k}$ with $\|s_{\bar{k}+1}\|\leq\sqrt{\delta}$ and $\varepsilon_{\bar{k}}\leq\varepsilon_{\tt min},$ or $\liminf_{k\to\infty}\|s_k\|=0$ and $\varepsilon_k\to0.$ In the latter case, any cluster point $\bar{x}$ of a subsequence $\{x_{k_j}\}$ such that $s_{k_j}\to0$ satisfies $0\in\partial f(\bar{x}).$
\end{thm}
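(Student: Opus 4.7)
\medskip

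\noindent\emph{Proof proposal.}
The plan is to dispose of the termination alternative directly from the algorithm description, then handle the non-termination case by combining the two preparatory results of this section (Theorem \ref{finitesuccesseslem} and Corollary \ref{infinitefailurescor}), and finally extract the criticality statement at a cluster point by passing to the limit in the $(\varepsilon,\eta)$-subgradient inclusion provided by Lemma \ref{skplus1lem}.

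First, if \Vd\/ terminates at some iteration $\bar{k}$, then the only exit is through Step 2, which by construction can fire only if $\|s_{\bar{k}+1}\|^2\leq\delta$ and $\varepsilon_{\bar{k}}\leq\varepsilon_{\tt min}$. This gives the first alternative. Otherwise, assume the algorithm runs indefinitely. Step 3 partitions the iterations into serious and null steps, and two cases arise. If infinitely many serious steps are declared, Theorem \ref{finitesuccesseslem} yields $\varepsilon_k\to 0$ and $\liminf_{k\to\infty}\|s_k\|=0$. If only finitely many serious steps occur, Corollary \ref{infinitefailurescor} gives the stronger conclusion $\varepsilon_k\to 0$ and $\|s_k\|\to 0$. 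In either situation we arrive at the second alternative.

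For the final claim, I would first note that Assumption \ref{ass:compactlowerlevel} together with the (approximate) descent properties driving the proofs of Theorem \ref{finitesuccesseslem} and Corollary \ref{infinitefailurescor} confine $\{x_k\}$ to a bounded set, so cluster points exist; picking a subsequence $\{x_{k_j}\}$ with $s_{k_j}\to 0$ and $x_{k_j}\to\bar{x}$ uses $\liminf\|s_k\|=0$ together with Bolzano--Weierstrass. Then I would apply Lemma \ref{skplus1lem} at each index to obtain
$$s_{k_j}\in\partial_{\varepsilon_{k_j-1}^{\,2}/r}^{\,\varepsilon_{k_j-1}K}f(x_{k_j}),$$
so that Definition \ref{e1e2df} gives, for every $x\in\R^n$,
$$f(x)\;\geq\; f(x_{k_j}) + s_{k_j}^\top(x-x_{k_j}) - \varepsilon_{k_j-1}K\,\|x-x_{k_j}\| - \frac{\varepsilon_{k_j-1}^{\,2}}{r}.$$
Letting $j\to\infty$, the continuity of $f$ gives $f(x_{k_j})\to f(\bar{x})$; the terms $s_{k_j}^\top(x-x_{k_j})$, $\varepsilon_{k_j-1}K\|x-x_{k_j}\|$, and $\varepsilon_{k_j-1}^{\,2}/r$ all vanish because $s_{k_j}\to 0$ and $\varepsilon_k\to 0$ (and $\|x-x_{k_j}\|$ stays bounded for fixed $x$). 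Hence $f(x)\geq f(\bar{x})$ for all $x\in\R^n$, so $\bar{x}$ minimizes $f$ and, by convexity, $0\in\partial f(\bar{x})$.

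The only delicate point I anticipate is the passage to the limit in the $(\varepsilon,\eta)$-subgradient inequality: one must simultaneously use $s_{k_j}\to 0$, $\varepsilon_{k_j-1}\to 0$, and continuity of $f$ at $\bar{x}$, keeping track of which error index ($k_j$ vs.\ $k_j-1$) is actually present in the bound. An alternative route would be to invoke Lemma \ref{gradequiv} and split $s_{k_j}=\bar{s}_{k_j}+v_{k_j}$ with $\bar{s}_{k_j}\in\partial_{\varepsilon_{k_j-1}^{\,2}/r}f(x_{k_j})$ and $\|v_{k_j}\|\leq\varepsilon_{k_j-1}K\to 0$, then use closedness of the graph of the $\varepsilon$-subdifferential as $\varepsilon\searrow 0$; either variant is routine once the ingredients above are assembled.
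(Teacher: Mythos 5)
Your proof is correct, and the first two-thirds of it follows the paper exactly: termination is handled through Step 2, and the non-termination case splits into infinitely many serious steps (Theorem \ref{finitesuccesseslem}) versus finitely many (Corollary \ref{infinitefailurescor}), giving $\varepsilon_k\to0$ and $\liminf_k\|s_k\|=0$ in either event. Where you genuinely diverge is the criticality claim at the cluster point. The paper invokes Lemma \ref{skplus1lem} and then passes to the limit at the level of set-valued maps: it cites Asplund's result on joint continuity of $(x,\varepsilon)\mapsto\partial_{\varepsilon^2/r}f(x)$, combines it with continuity of $\varepsilon\mapsto B_{K\varepsilon}$ and the characterization in Lemma \ref{gradequiv}, and concludes that the $(\varepsilon,\eta)$-subdifferential mapping is continuous, so the inclusions $s_{k_j+1}\in\partial^{\varepsilon_{k_j}K}_{\varepsilon_{k_j}^2/r}f(x_{k_j+1})$ pass to the limit to give $0\in\partial f(\bar{x})$. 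You instead unwind Definition \ref{e1e2df} pointwise: for each fixed $x$ you take the limit in $f(x)\geq f(x_{k_j})+s_{k_j}^\top(x-x_{k_j})-\varepsilon_{k_j-1}K\|x-x_{k_j}\|-\varepsilon_{k_j-1}^2/r$ using $s_{k_j}\to0$, $\varepsilon_k\to0$ and continuity of the (finite convex) $f$, obtaining $f(x)\geq f(\bar x)$ for all $x$, i.e.\ $\bar x$ is a global minimizer and hence $0\in\partial f(\bar x)$. Your route is more elementary and self-contained --- it needs no external continuity results for $\varepsilon$-subdifferentials and in fact delivers global optimality of $\bar x$ directly --- while the paper's route stays at the subdifferential level and reuses Lemma \ref{gradequiv}; both are valid, and your bookkeeping of the index shift ($\varepsilon_{k_j-1}$ attached to $s_{k_j}$ at $x_{k_j}$) is the correct reading of Lemma \ref{skplus1lem}. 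One minor remark: your opening claim that compact level sets plus descent confine the whole sequence $\{x_k\}$ is not needed (the theorem is conditional on a cluster point of the chosen subsequence existing) and is not quite automatic as stated, since the $\U$-steps in Step 4 are not monotone in $f$; better to simply work with the given convergent subsequence, as the paper does.
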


\begin{proof}
If the algorithm terminates at iteration $\bar{k},$ by Step 2 we have $\|s_{\bar{k}+1}\|\leq\sqrt{\delta}$ and $\varepsilon_{\bar{k}}\leq\varepsilon_{\tt min}.$ Suppose the algorithm does not terminate. If there is an infinite number of serious steps taken, then $\liminf_{k\to\infty}\|s_k\|=0$ and $\varepsilon_k\to0$ by Theorem \ref{finitesuccesseslem}. If there is a finite number of serious steps taken, then $s_k\to0$ and $\varepsilon_k\to0$ by Corollary \ref{infinitefailurescor}. In either case, consider any subsequence $\{x_{k_j}\}$ with cluster point $\bar{x}$ such that $s_{k_j}\to0.$ Since $\varepsilon_k\to0,$ we have $\varepsilon_{k_j}\to0.$ By \cite[Proposition 5]{asplund1969gradients}, we have that the $(\varepsilon_{k_j}^2/r)$-subdifferential of $f$ is continuous jointly as a function of $(x,\varepsilon_{k_j})$ on $(\ri\dom f)\times(0,\infty).$ Since $B_{K\varepsilon_{k_j}}$ is continuous as well, by \cite[Proposition 3.2.7-3]{nel2016continuity} and \eqref{gradequiv1} we have that $\partial_{\varepsilon_{k_j}^2/r}^{\varepsilon_{k_j} K}$ is continuous. Therefore, since $s_{k_j+1}\in\partial^{\varepsilon_{k_j}K}_{\varepsilon^2_{k_j}/r}f(x_{k_j+1})$ by Lemma \ref{skplus1lem}, $s_{k_j+1}\to0,$ $\varepsilon_{k_j}\to0$ and $x_{k_j+1}\to\bar{x},$ we have $0\in\partial f(\bar{x}).$
\end{proof}

\section{Numerical Results}\label{sec:numerics}

In this section, we present some numerical tests using \Vd. The tests were run on a 3.2 GHz Intel Core i5 processor with a 64-bit operating system, using MATLAB version 9.3.0.713579 (R2017b).
Our testing includes the following algorithms.
 \begin{enumerate}[wide=0pt]
\item \Vd\/ using the default of $2n+1$ function calls per Hessian approximation;
\item  \Vc, an inexact bundle method along the lines of \cite{deOliveiraSagaLemar2014}, with access to the grey-box available to {\sc DFO-VU}: the value function is exact and the subgradient is approximated by means of the DFO approach in Subroutine \ref{sub1};
\item \Vb, a classical bundle method in proximal form \cite[Part II]{bonnans-gilbert-lemarechal-sagastizabal-2006};
\item \Va, the Composite Bundle method from \cite{sagastizabal-2013};
\item \Ve, a well-established DFO method from \cite{le2009nomad}.  
\end{enumerate}
Algorithms \Vd\/, \Vc\/, \Vb\/, and \Va\/ are all bundle-style algorithms, while \Ve\/ is a DFO solver.  Algorithms \Vb\/ and \Va\/ use \emph{exact subgradient information}. As such, we expect those solvers to outperform both \Vd\/ and \Vc.  These inexact variants, by contrast, are on equal ground and we expect to see a positive impact of the $\mathcal{VU}$-decomposition in terms of accuracy.

We use two sets of test problems.  The first test set contains convex problems that satisfy the assumptions used in the proof of convergence.  The second test set contains nonconvex problems that do not satisfy the assumptions used in the proof of convergence.  This test set was done solely for prospective illustrative purposes, since the convergence of \Vd\/ for
nonconvex functions is beyond the scope of this work.  As such, for the nonconvex problems we only examine the behaviour of \Vd\/.

\subsection{Convex test functions and benchmark rules for the bundle solvers}\label{ss-benchrules}

We considered 301 max-of-quadratics functions. The first one is the classical {\sc maxquad} function in nonsmooth optimization \cite[Part II]{bonnans-gilbert-lemarechal-sagastizabal-2006}, for which the dimension is $n=10$, the optimal value is $\bar f=-0.84140833459641814$, and  $\dim\mathcal{V}$ at a solution is equal to 3. The remaining 300 problems were generated randomly in dimensions $n\in\{10,20,30,40,50\}$.  Each problem is generated such that the minimizer is $\bar x=0\in\R^n$ with $\bar f=0$.  The problems are designed with various final $\mathcal{V}$-dimensions $\dim\mathcal{V}\in\{0.25n, 0.5n, 0.75n\}.$ The functions were generated as follows; given $m\geq|A(\bar x)|=\dim\mathcal{V}+1$, 
\begin{equation}\label{def-testf}
f(x)=\max\limits_{j\in\{1,2,\ldots,m\}}\left\{\frac 12 x^\top H_j x+b_j^\top x\right\},\end{equation}
for random $H_j\in S^n_+$ and $b_j\in\R^n$. The symmetric, positive semidefinite matrices $H_j$ have condition number equal to $(\rank H)^2=(\dim\mathcal{V})^2$, and the set of vectors $\{b_2-b_1,\ldots,b_{\dim\mathcal{V}+1}-b_1\}$ is linearly independent. The above setting guarantees that all the assumptions in Section~\ref{ssec:assumptions} hold for the considered instances.\par We must acknowledge and accept that some of the inner workings of each solver make it difficult to compare the results fairly. First, \Va ~and \Vb ~make blackbox-calls ({\tt bb}-calls) that yield exact values for the function and a subgradient, while \Vc ~and \Vd ~call a grey-box that yields exact function values and approximate subgradients. Second, to avoid machine error due to a near-singular matrix in the second-order approximation created in Subroutine \ref{sub2}, \Vd\ ~stops  when in Step 4 the parameter $\varepsilon_k$ becomes smaller than $10^{-5}$. Third, \Vc ~stops when there are more than 18 consecutive noise-attenuation steps; we refer the reader to \cite{deOliveiraSagaLemar2014} for details. Barring the above, the parameters for \Va, \Vb, and \Vc ~are those chosen for the Composite Bundle solver in \cite{sagastizabal-2013}. In an effort to make the comparisons as fair as possible, we adopted the following rules.
\begin{enumerate}[wide=0pt]
\item All solvers use the same quadratic programming built-in MATLAB solver, {\tt quadprog}.\medskip
\item For all solvers, the stopping tolerance was set to $10^{-2}$, which for \Vd ~means that in Step 2, $\delta=\varepsilon_{\min}=10^{-2}$.\medskip
\item The maximum number of ${\tt bb}$-calls was set to {\maxs}$= 800\min(n,20)$. This corresponds to function and subgradient evaluations for the exact variants and to function evaluations for the inexact variants.\medskip
\item For all solvers, a run was declared a failure when \maxs ~was reached or when there was an error in the QP solver.\medskip
\item The methods use the same starting points, with components randomly drawn in $[-1,1]$. We ran all the instances with two starting points, for a total of $602$ runs.
\end{enumerate}
For those readers interested in implementing \Vd,
we mention the following additional numerical tweaks that had a positive impact
in the algorithm's performance.
\begin{enumerate}[wide=0pt]
\item In the $\mathcal{U}$-step, finding the active index set $A(x_k)$
in Subroutine~\ref{sub1}  is tricky. We note that using an absolute criterion ($f_i(x_k)=f(x_k)$)
was worse than the following soft-thresholding test:
\begin{equation}\label{eq:almostactive}
i\in A(x_k) \mbox{ when }
f(x_k)- f_i(x_k)\leq 0.001|f(x_k)|\,.
\end{equation}
\item In Step 1.3, it is often preferable to calculate the proximal point
$z_{j+1}$ by solving the dual of the quadratic programming problem defining
$\Prox_{\varphi_j^{\varepsilon_k}}^r(z_0)$.
\item   The tilting of gradients in \eqref{tilteq} is done only when
$E_j$ is larger than $10^{-8}.$ Otherwise, we set $g_j^{\varepsilon_k}=\tilde{g}_j^{\varepsilon_k}.$\medskip
\item  As long as the proximal parameter remains uniformly bounded, it can vary along iterations without impairing
the convergence results.
We have found the following rule to be effective, and use it in our testing,
\[ t_k=\left\{\begin{array}{ll}
\frac{0.5|g(x_k)|^2} {1+|f(x_k)|},
&\mbox{if }|f(x_k)|>10^{-10},\\
2,&\mbox{otherwise,}\end{array}\right.\]
and let
\[ r_k=\max\left\{1,\min\left\{\frac{1}{t_k},100r_{k-1},10^6\right\}\right\}\,.\]
\item In Step 2.5, the new bundle $\mathcal{B}_{j+1}$ keeps \emph{almost
active} indices. As can be seen from \eqref{eq:almostactive}, we accept as active the subfunctions that are close to active at each iteration point, so as not to dismiss those that are active but do not quite appear to be so because of numerical error.
\end{enumerate}

\subsection{Benchmark of bundle solvers}

We first describe the indicators defined to compare the solvers. The number of iterations is not a meaningful measure for comparison, because each solver involves a very different computational effort per iteration. This depends not only on the solver, but also on how many evaluations are done per iteration. Moreover, since the exact variants do not spend calls to make the DFO subgradient approximation, neither the total solving time nor the number of {\tt bb}-calls are meaningful measures. As the optimal values are known for the considered instances, we compare the accuracy reached by each solver. Denoting the best function value of the analyzed case by $f^{\tt found},$
\[\mbox{\lttRA}=
\max\left[0,- \log_{10} \max\left(10^{-16},\frac{f^{\tt found} - \bar
f}{1+|\bar f|}\right)\right]\]is the number of digits of accuracy achieved by the solver. We also analyze the ability of each solver in capturing the (known) exact $\V$-dimension, by looking at the cardinality of $A(x^{\tt found})$ as in 
\eqref{eq:almostactive}, for $x^{\tt found}$ the final point found by each solver, and computing
$v^{\tt found}=|A(x^{\tt found})|-1$.

Since {\sc maxquad} is a well-known test function for bundle methods, in Table~\ref{tab-maxquad} we report separately the measures obtained for this function, running the four solvers with two starting points.
\begin{table}[H]\centering\caption{Results for {\sc maxquad} test function, $\dim\V(\bar x)=3$.}
 \begin{tabular}{ll|c c c c }
 & &\Va  & \Vb  & \Vc  & \Vd  \\
 \hline First $x_0$ &$\begin{array}{l}\mbox{\ttRA}\\v^{\tt found}\end{array}$ & $\begin{array}{c} 5\\ 3\end{array}$ & $\begin{array}{c}  2\\ 1\end{array}$ & $\begin{array}{c}  1\\1\end{array}$ & $\begin{array}{c}  3\\3\end{array}$ \\
 \hline Second $x_0$&  $\begin{array}{l}\mbox{\ttRA}\\v^{\tt found}\end{array}$ & $\begin{array}{c} 5\\ 3\end{array}$ & $\begin{array}{c}  2\\ 0\end{array}$ & $\begin{array}{c}  1\\1\end{array}$ & $\begin{array}{c}  3\\2\end{array}$ \\\hline
\end{tabular}\label{tab-maxquad}\end{table}
We observe a very good performance of \Vd, both in terms of accuracy and $\V$-dimension, which is underestimated in the second run. Such underestimation means that \Vd\/ is taking $\mathcal{U}$-steps in a larger subspace. Of course, the price to be paid (especially with our rudimentary implementation) is computing time, which passes from a few seconds
with \Va-\Vc, to 2 minutes with \Vd.

The solver performance for the remaining 600 runs was similar. For each problem and the two random starting points, we organized the output into five groups, corresponding to increasing percentages of the $\mathcal{V}$-dimension at $\bar{x}$ with respect to $n$. Each row in Table~\ref{tab-MQAve} reports for each solver the mean value of the digits of accuracy, averaged for each group. The bottom line in Table~\ref{tab-MQAve} contains the total number of instances considered for the test and the total average values for \mbox{\ttRA}.
\begin{table}[H]\centering\caption{Average \mbox{\ttRA} for 602 ({\sc maxquad} and 300 random problems, each with 2 starting points) runs.}
 \begin{tabular}{cl|c c c c }
 $\dim\V(\bar x)$ &\# of runs&\Va  & \Vb  & \Vc  & \Vd  \\
 \hline
$\in(0\%,15\%) n$ &96&      3.99 &0.78  &    0.58  &  1.44\\
$\in[15\%,30\%) n$& 182&    4.79 &1.12 &     0.89  &  1.63\\
$\in[30\%,45\%) n$&134&     3.93 &0.91 &     0.61  &  1.05\\
$\in[45\%,60\%) n$& 106&    4.21 &0.96 &     0.62  &  1.16\\
$\in[60\%,100\%) n$ &84&    5.75 &1.36 &     1.07  &  2.15\\\hline
$\in(0\%,100\%) n$&602&4.50&   1.02&   0.76&  1.46\\\hline
\end{tabular}\label{tab-MQAve}\end{table}
As conjectured, in terms of accuracy on the optimal value, \Va\/ is far superior to all the other variants. The inexact bundle method \Vc\/ performs reasonably well, but is systematically outperformed by \Vd. An interesting feature is that, in spite of using only approximate subgradient information,  \Vd\/ achieves better function accuracy than the exact classical bundle method, \Vb. This fact confirms the interest of exploiting available structure in the bundle method, even if the information is inexact.\par Table \ref{tab:vdim} below gives another indication of the performance of \Vd\/ and \Vc\/ in predicting the dimension of the $\V$-space. Out of the 602 runs, we list the number of times  that each algorithm returned the exact $\V$-dimension, the number of times $v^{\tt found}$ was within $1,$ $2$ or $5,$ and the number of times it was more than $5$ away from the correct $\V$-dimension.
\begin{table}[H]\caption{The $\V$-dimension prediction comparison between the inexact solvers.}\begin{center}\begin{tabular}{l|ccccc}\label{tab:vdim}
&\# Exact&\# $\pm1$&\# $\pm2$&\# $\pm5$&\# $>5$\\
\hline
  {\Vc}&$161~~(27\%)$&$351~~(58\%)$&$441~~(73\%)$&$528~~(88\%)$&$74~~(12\%)$\\
\Vd&$408~~(68\%)$&$486~~(81\%)$&$513~~(85\%)$&$551~~(92\%)$&$51~~~~(8\%)$\\
\hline
\end{tabular}\end{center}\end{table}
\noindent In almost $70\%$ of the runs, \Vd\/ correctly predicted the $\V$-dimension, more than double what \Vc\/ was able to do. This is a strong indicator that \Vd\/ is able to do what it is meant to do in that respect; \Vc\/ is not meant to make this prediction, so we expect to see the results that we have.
	
In order to interpret the output graphically, we created profiles for the accuracy over the full set of 602 instances, see Figure \ref{figMQF}. In the graph, each curve represents the cumulative probability distribution $\phi_s(\theta)$ of the resource ``$f$-accuracy", measured in terms of the reciprocal of \ttRA. The use of $1/$RA as an indicator stems from the fact that usually smaller values of the abscissa $\theta$ mean better performance of the resource. As in our case higher accuracy is preferred, we invert the relation to plot the profile. In this manner, in all the profiles that follow, the solvers with the highest curves are the best ones for the given indicator of performance.
\begin{figure}\begin{center}
\includegraphics[width=0.5\linewidth]{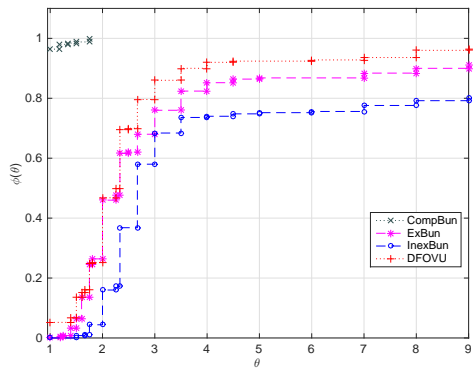}
\caption{Accuracy Profile: (reciprocal of)  accuracy, all solvers.}
\label{figMQF}\end{center}\end{figure}
\noindent For the left endpoint $\theta=\theta_{\min}$ in the graph, the probability $\phi_s(\theta_{\min})$ of a particular solver is the probability that the solver will provide the highest accuracy among all algorithms. Looking at the highest value for the left endpoint in Figure~\ref{figMQF}, we conclude that the most precise solver is \Va~ in all of the runs, as expected. The \Vd\/ solver is the second-best, followed by \Vb.\par In general, for a particular solver $s$, the ordinate $\phi_s(\theta)$ gives information on the percentage of problems that the considered method will solve if given $\theta$ times the resource employed by the best one. Looking at the value of $\theta=3$, we see that \Vd\/ solves about 85\% of the 602 problems ($\phi(3)>0.8$) with a third (=1/$\theta$) of the accuracy obtained by \Va, while \Vc\/ solves less than 70\% ($\phi(3)<0.7$).

Considering that the comparison with exact variants is not entirely fair, we repeat the profile, this time comparing only \Vc\/ and \Vd. The values of $\theta=1$ in Figure~\ref{fig-MQF34} show that \Vc\/ was more accurate than \Vd\/ in fewer than 20\% of the runs ($\phi(1)<0.2$).
\begin{figure} \begin{center}
\includegraphics[width=0.5\linewidth]{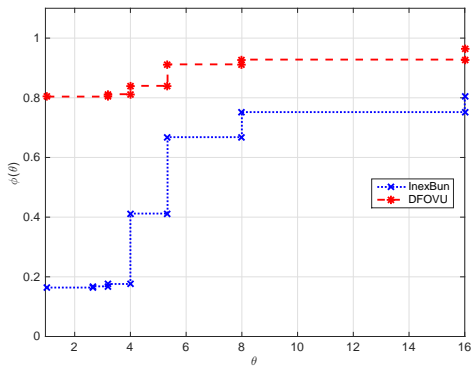}
\caption{Accuracy Profile: (reciprocal of)  accuracy, solvers \Vc\/ and \Vd.}
\label{fig-MQF34} \end{center} \end{figure}

We now comment on CPU time, function evaluations and failures of bundle solvers. Naturally, the gain in accuracy of \Vd\/ comes at the price of CPU time. As expected, the fastest solver in all of the runs is \Va, followed by \Vb, \Vc, and \Vd. The average CPU time in seconds was 0.47 for \Va, 0.28 for \Vb, 0.40 for \Vc, and 61 for \Vd. The time increase for \Vd\/ is better understood when examining the respective average number of calls to the oracle, equal to 8 for \Va, 26 for \Vb, 504 for \Vc, and 52330 for \Vd. There is a factor of close to 20 when passing from \Vb\/ to \Vc, whose only difference is in the use of the inexact (simplex) gradients. The factor of 100 between the oracle calls required by \Vc\/ and those required by \Vd\/ is explained by the fact that \Vd\/ approximates not only the gradient, but also the $\U$-Hessian. Such an increase is not a surprise, as our implementation of \Vd\/ is not optimized and the computational burden required by \Vd\/ is much higher than that required by \Vc. We comment on possible numerical enhancements in this regard in Section \ref{sec:conc}.\par Regarding failures, there was none for \Va, \Vb\/ and \Vc, whose respective stopping tests were triggered in all 602 runs. \Vd\/ failed 104 times having reached  the maximum number of allowed evaluations (\maxs), and twice when the parameter $\varepsilon_k$ became unduly small. This figure represents 17.5\% of all the runs. Most of the failures of  \Vd\/ by \maxs\/ remained even after increasing \maxs ~by a factor of 10. It is our understanding that the method reached its limit of accuracy in those instances, which likely had worse conditioning and were too difficult to solve with our inexact method. By constrast, a close observation of failures of \Vd\/ in previous runs that were due to a small $\varepsilon_k$ gave us some hints for improvement of the algorithm's performance.  We noticed that when $\varepsilon_k$ becomes too small, the stopping test in Step 1.4 becomes hard to attain and the $\mathcal{V}$-step becomes dismayingly slow. It is important to tune the manner in which $\varepsilon_k$ decreases, so that the reduction is not done too fast. For our experiments, we update $\varepsilon_k$ in Steps 3.1 and 3.3 of \Vd ~by $\varepsilon_{k+1}=0.9\varepsilon_k$. This appeared a reasonable setting for the considered 602 instances. These precautions help to ensure that the solution to \eqref{eq:ustepalg} does not become near-singular.\par
We finish by noting that \Vb, the classical bundle method, is extremely reliable, but neither as accurate nor as fast as \Va, which fully exploits structure of composite functions and uses exact gradient information. Of the four solvers, if the gradient evaluations can be done exactly, \Va\/ is to be preferred. Otherwise, \Vd\/ seems a good option for cases when accuracy of the solution is a more important concern than solving time.

\subsection{Benchmark of DFO solvers}
Having analyzed the qualities and weaknesses of \Vd\/ with respect to other bundle methods, we now examine the behaviour of \Vd\/ when compared to an established DFO solver, \Ve\/ \cite{le2009nomad}. \Ve\/ is a program that uses as its basis the MADS (Mesh Adaptive Direct Search) algorithm \cite{audet2008nonsmooth}. In the MADS algorithm, trial points on a mesh are evaluated and the mesh size for the next iteration is adjusted according to the findings of the current one. We refer the reader to \cite{audet2008nonsmooth,le2009nomad} for details on the implementation of \Ve\/ and the structure of the MADS algorithm.

In order to run \Ve\/ using Matlab, a mex-file was generated from the GERAD version of the package. The parameters were set using the built-in command
\begin{verbatim}
opts=nomadset(`display_degree',0,`bb_output_type',`obj')\end{verbatim}
Using the same battery in Table~\ref{tab-MQAve}, \Ve\/ took a very long time to trigger the stopping test for some functions. For this reason, to make the comparisons both methods
were given a time budget of 60 minutes. The \Ve\/ output corresponds to either the numerical solution found by the solver if the stopping test was triggered, or the last computed point if the maximum CPU time was attained.

Table~\ref{tab-DFO} reports the accuracy obtained by the two solvers.  The first row provides the results for the classical {\sc maxquad} problem.  The remaining rows provide averages for randomly generated problems, subdivided by dimenision. 
 
\begin{table}[H]\centering\caption{Average \mbox{\ttRA} for the DFO solvers with time budget}
 \begin{tabular}{|ll||c| c| c| c| c| c| c| c| }\hline
Problem set && \Vd  & \Ve  \\ \hline 
{\sc maxquad} && 2.17& 2.75\\ 
 \hline $n=10$ && 3.82& 1.47\\ 
 \hline $n=20$ && 2.85& 0.59\\ 
 \hline $n=30$ && 1.29& 0.32\\
 \hline $n=40$ && 1.11& 0.18\\
 \hline $n=50$ && 1.03&0.13\\
 \hline 
\end{tabular}
\label{tab-DFO}
\end{table}
For {\sc maxquad}, both solvers have similar behaviour with a slight superiority for \Ve.
Barring {\sc maxquad} runs, \Vd\/ systematically outperforms \Ve.
This is not a surprise, as \Ve\/ is a general purpose solver, designed
to solve general (constrained) problems. By contrast, \Vd\/ is
a specialized method that fully 
exploits the knowledge of the max-structure of the objective function in 
\eqref{prob1}. 

The performance profiles in Figure \ref{fig-both} illustrate the same phenomena in a graphical manner.
\begin{figure} \begin{center}
\includegraphics[width=0.9\linewidth]{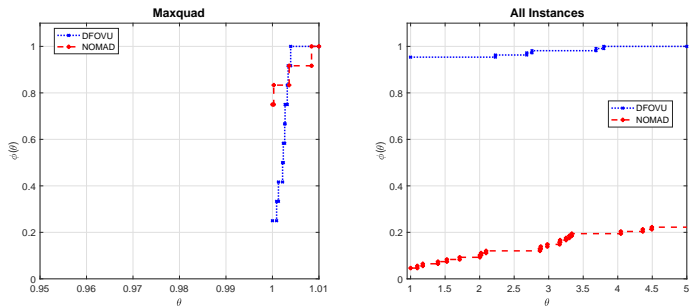}
\caption{Accuracy Profile: (reciprocal of)  accuracy, solvers \Vd\/ and \Ve.}
\label{fig-both} \end{center} \end{figure}
In the case of the {\sc maxquad} function (left graph), the performance of the two solvers is so close that the abscissa scale is on the order of hundredths. The slight initial advantage of \Ve\/ is visible; the leftmost part of the red curve lies above that of the blue curve. However, in the case of the randomly generated functions (right graph), \Vd\/ is obviously the solver of choice. We constructed profiles separated by dimension size as well, but they are all similar to the one with all dimensions included here, so the other graphs are not presented as they offer no new information.

As for CPU time, function calls and failures, we observed the following. The average CPU time for \Vd\/ over all runs was 51.5 seconds, whereas for \Ve\/ the average was 701.3 seconds. It was commonplace for \Ve\/ to use thousands of seconds of CPU time, with 35 instances reaching the imposed limit of 60 minutes. This is not unexpected, as the slower convergence speed of the MADS algorithm has been observed in previous literature \cite{bHurmen2015mesh,rabaiotti2011inverse,vasant2012hybrid}. The number of function calls is more comparable; the average was 57,857 for \Vd\/ and 41,895 for \Ve. \Ve\/ required fewer function calls than \Vd\/ fairly consistently, but the two numbers were always on the same order of magnitude. There were no failures on either side, except that \Ve\/ timed out on 35 runs. However, each of those instances returned a final function value of about 1 or 2, still reasonably close to the true minimum of 0. Given these details, we believe that the comparison is fair and represents a good performance of \Vd.

\subsection{Behaviour for nonconvex problems}

Even though our convergence analysis was developed for convex problems only, we also ran \Vd\/ on a battery of nonconvex functions, to check numerically how the method behaves in this case.  The test functions are of the form \eqref{def-testf}, again randomly generated with a known functional value at a critical point, satisfying $0\in\partial f(\bar x)$ for Clarke's subdifferential. Nonconvexity is induced by taking, among all the matrices $H_j$ defining the quadratic subfunctions in \eqref{def-testf}, at least one that is negative definite. However,
since we are dealing with unconstrained problems, to have a local solution that is finite-valued, one of the $m$ random matrices is forced to be positive definite. 

We note that only \Vd\/ was tested on these problems.  Algorithms \Vc, \Vb, and \Va, are specifically designed for convex functions and no longer work when subgradients from nonconvex functions are used.  We were unable to make \Ve\/ perform in a reasonable manner on these problems and prefer not to present suboptimal results.

This test set consists of 1000 test functions, 200 problems with 5 starting points each, with $n\in \{10, 20, 30, 40, 50\}$. The results are encouraging.  In fact, without making any changes in the implementation, the \Vd\/ stopping test was triggered in 958 cases.  For these successful runs, Table~\ref{tab-DFONC} reports the number of oracle calls, CPU time and digits of accuracy, again shown in average separately for each one of the five groups of test functions. 
 
\begin{table}[H]\centering\caption{Average output for
\Vd\/ on 958 successful nonconvex runs }
 \begin{tabular}{|l||c| c| c| }
\hline Problem set & grey-box calls  &  time&\ttRA \\
 \hline $n=10$ \scriptsize(189 successful runs) &2067& 0.87& 1.08\\ 
 \hline $n=20$ \scriptsize(200 successful runs) &3135& 0.96& 0.75\\ 
 \hline $n=30$ \scriptsize(191 successful runs) &1440& 0.80& 0.44\\
 \hline $n=40$ \scriptsize(190 successful runs) &2659& 1.67& 0.35\\
 \hline $n=50$ \scriptsize(188 successful runs) &2690& 2.00&0.16\\
 \hline 
\end{tabular}

\label{tab-DFONC}
\end{table}

Clearly, the accuracy levels are not as good as for the convex case. However, the time and grey-box calls were improved over the convex case.  This suggests that the stopping condition is somehow easier to trigger in the nonconvex setting.   In general, the output is consistent, with worse indicators for problems in higher dimensions. The group with $n=30$ is an exception, since it required fewer function evaluations (1440) than the easier instances, with $n=10$ or 20.  The fact that the solver ends up with false positive output is not unexpected, considering the stopping test is designed on the basis of the convergence analysis, which holds for convex problems only. 

Among the 42 runs in which \Vd\/ failed, the parameter $\varepsilon_k$ became too small for the problems in higher dimensions ($n\geq30$). Once more, this is not surprising, as the method had already shown high sensitivity to that parameter in the convex instances. In lower dimensions ($n\leq20$), sometimes \Vd\/ failed in building a suitable matrix $\hat{M}$. In view of Remark~\ref{rem:zero}, this suggests the need to fine-tune the parameter $\varepsilon_k$, to adapt its iterative definition to the nonconvex setting.

Overall, the results indicate that it might be worthwhile to extend both the theory and the implementation of \Vd\/ to tackle nonconvex functions.
\section{Conclusion}\label{sec:conc}

We have presented a complete and fully-functional DFO $\VU$-algorithm for convex finite-max objective functions on $\R^n$ under reasonable assumptions. This extends the original algorithm of \cite{vualg} into the derivative-free setting, where exact function values are available but approximations of subgradients are sufficient for convergence. Numerical testing suggests that, at the expense of increased CPU time and number of function calls, the DFO $\VU$-algorithm provides an improvement on final function value accuracy when compared to other inexact methods, and even compared to the \Vb ~method that uses exact first-order information. Convergence rate analysis was not performed in this paper; we leave that for a future project.\par

We point out that the numerical testing was done using a proof-of-concept implementation \Vd\/ and that there is much room for improvement of its performance. We did hand-tune the parameters to get good performance, but other tweaks in the code that were not done would help as well. For instance, a hard reset happens at every iteration, which means that nearby function values already calculated are not reused in the construction of the next model function. Retaining a cache of function calls and referencing it before making new evaluations would reduce the total number of grey-box calls. In addition, in the construction of the simplex gradient we used the coordinate directions. A method such as Householder transformation \cite{pressnumerical2007} could be used to rotate the coordinates so that the first canonical vector points in the previous descent direction. We expect these adjustments to reduce the number of function calls by a factor between $n$ and $n^2,$ so it is encouraging to know that future work on this project should result in quite a significant enhancement of the algorithm.\par 

As mentioned in the introduction, one weakness of this algorithm is that convergence applies the assumption that the objective function is convex.  It is unclear how strong an assumption this is for finite-max grey-box functions, however it would obviously be beneficial if the assumption could be relaxed.  One starting point might be the research on $\VU$-structures for nonconvex functions \cite{mifflinesagastizabal2004}.

Another interesting approach may come from the line of recent work by Salom\~ao, Santos, and Sim\~oes: 
 \cite{salomao2016differentiability,salomao2017local,salomao2016second}. In \cite{salomao2016second}, the authors present a gradient sampling method that has improved convergence speed, thanks to $\VU$-decomposition. They stress the point that gradient sampling is convenient when the objective function is nonconvex, avoiding the complications that arise when bundle methods such as the $\VU$-algorithm are applied to nonconvex functions. The algorithm therein retains some components of the $\VU$-algorithm in order to speed up convergence; it uses quasi-Newton techniques in the $\U$-space and cutting-plane techniques in the $\V$-space. It is our belief that the derivative-free methods presented in this paper should be applicable in the algorithm of \cite{salomao2016second} and other similar algorithms. Exact first-order data are employed currently, but since we have seen that such gradients (at least for finite-max problems, which the authors of \cite{salomao2016second} cite as motivation for their method) can be approximated to any desired degree of accuracy, there is good reason to conjecture that the same approach would work there. It is a natural direction for the continuation of this line of research.

\bibliographystyle{spmpsci}
\bibliography{Bibliography}{}

\end{document}